\providecommand{\U}[1]{\protect\rule{.1in}{.1in}}
\newtheorem{theorem}{Theorem}[section]
\newtheorem{lemma}[theorem]{Lemma}
\newtheorem{definition}{Definition}[section]
\newtheorem{property}{Property}[section]
\theoremstyle{definition}
\theoremstyle{remark}
\numberwithin{equation}{section}
\let\pdfoutput=\undefined\fi
\begin{document}
\pagestyle{myheadings}

\begin{center}
{\huge \textbf{A system of $k$ Sylvester-type quaternion matrix equations with $3k+1$ variables}}\footnote{This research was supported by  the National Natural
Science Foundation of China (Grant no. 11971294). Email address: wqw@t.shu.edu.cn}

\bigskip

{ \textbf{Qing-Wen Wang$^{a,*}$, Mengyan Xie$^{a}$, }}

{\small
\vspace{0.25cm}

$a.$ Department of Mathematics, Shanghai University, Shanghai 200444, P. R. China}

\end{center}

\begin{quotation}
\noindent\textbf{Abstract:} In this paper, we provide some solvability conditions in terms of ranks for the existence of  a general solution to a system of $k$ Sylvester-type quaternion matrix equations with $3k+1$ variables
$A_{i}X_{i}+Y_{i}B_{i}+C_{i}Z_{i}D_{i}+F_{i}Z_{i+1}G_{i}=E_{i},~i=\overline{1,k}$. As applications of this system, we present rank equalities as the necessary and sufficient conditions  for the existence of a  general solution
to some systems of quaternion matrix equations
$A_{i}X_{i}+(A_{i}X_{i})_{\phi}+C_{i}Z_{i}(C_{i})_{\phi}+F_{i}Z_{i+1}(F_{i})_{\phi}=E_{i},~i=\overline{1,k}$.
 involving $\phi$-Hermicity.  
\newline%
\noindent\textbf{Keywords:} Quaternion matrix equation; $\phi$-Hermitian; General solution; Solvability\newline\noindent
\textbf{2020 AMS Subject Classifications:\ }{\small 15A09, 15A24, 15B33, 15B57}\newline
\end{quotation}

\section{\textbf{Introduction}}

Sylvester matrix equation and its generalizations are closely related with many problems in robust control \cite{Varga}, neural network \cite{zhangyong}, output feedback control \cite{VLS}, graph theory \cite{Dmytryshyn}, and so on. Recently, some researchers have considered Sylvester-type matrix equations over the quaternion algebra. Quaternion algebra is an associative and noncommutative division algebra over the real number field. Nowadays quaternion has applications in signal
and color image processing, quantum physics, and so on.

In this paper, we aim to consider the solvable conditions for the existence of a general solution to the systems of quaternion matrix equations:
\begin{align}
\label{mainsystem01}
  \left\{\begin{array}{c}
A_{1}X_{1}+Y_{1}B_{1}+C_{1}Z_{1}D_{1}+F_{1}Z_{2}G_{1}=E_{1} \\
A_{2}X_{2}+Y_{2}B_{2}+C_{2}Z_{2}D_{2}+F_{2}Z_{3}G_{2}=E_{2}\\
\vdots\\
A_{k}X_{k}+Y_{k}B_{k}+C_{k}Z_{k}D_{k}+F_{k}Z_{k+1}G_{k}=E_{k},
\end{array}
\right.
\end{align}
where $A_{k},B_{k}, C_{k},D_{k},F_{k},G_{k}$ and $E_{k}$ are given matrices, $X_{k},~Y_{k},~Z_{k},~Z_{k+1}$ are unknowns. We give a practical necessary and sufficient conditions for the existence of a solution to the system (\ref{mainsystem01}) in terms of ranks. The system (\ref{mainsystem01}) have been firstly investigated by \cite{wanghe4444444} when $k=1$. He \cite{heaaca02} give the solvable conditions and general solution to system (\ref{mainsystem01}) when $k=2$. However, it is hard to solve the Theorem \ref{theorem31} by using the same approach in (\ref{mainsystem01}).  We will use another approach to prove Theorem \ref{theorem31}.

Since Rodman \cite{rodman} first presented the  definition of $\phi$-Hermitian quaternion matrix, there have been several papers to discuss the quaternion matrix equations involving $\phi$-Hermicity (e.g., \cite{Zhuohengfilomat}, \cite{heaaca}, \cite{heaaca02}, \cite{heela}). As applications of Theorem \ref{theorem31}, we present rank equalities as the solvable conditions for the system of quaternion matrix equations involving $\phi$-Hermicity
\begin{align}\label{mainsystem02}
  \left\{\begin{array}{c}
A_{1}X_{1}+(A_{1}X_{1})_{\phi}+C_{1}Z_{1}(C_{1})_{\phi}+F_{1}Z_{2}(F_{1})_{\phi}=E_{1},\\
A_{2}X_{2}+(A_{2}X_{2})_{\phi}+C_{2}Z_{2}(C_{2})_{\phi}+F_{2}Z_{3}(F_{3})_{\phi}=E_{2},\\
\vdots\\
A_{k}X_{k}+(A_{k}X_{k})_{\phi}+C_{k}Z_{k}(C_{k})_{\phi}+F_{k}Z_{k+1}(F_{k})_{\phi}=E_{k},\\
Z_{i}=(Z_{i})_{\phi}.
\end{array}
  \right.
\end{align}

The remainder of this paper is organized as follows. In Section 2, we review some definitions of quaternion algebra and consider a quaternion matrix equation. In Section 3, we present some practical necessary and sufficient conditions for the existence of a solution to the system (\ref{mainsystem01}) in terms of ranks and Moore-Penrose inverses.  In Section 4, we review the definition of $\phi$-Hermitian quaternion matrix and present some solvability conditions to the system (\ref{mainsystem02}) involving $\phi$-Hermicity . 

\section{\textbf{Preliminaries}}

Let $\mathbb{R}$ and $\mathbb{H}^{m\times n}$ stand, respectively, for the real field and the space of all $m\times n$ matrices over the real quaternion algebra
\[
\mathbb{H}%
=\big\{a_{0}+a_{1}\mathbf{i}+a_{2}\mathbf{j}+a_{3}\mathbf{k}\big|~\mathbf{i}^{2}=\mathbf{j}^{2}=\mathbf{k}^{2}=\mathbf{ijk}=-1,a_{0}%
,a_{1},a_{2},a_{3}\in\mathbb{R}\big\}.
\]
 The symbols $r(A)$ and $A^{\ast}$ stand for the rank of a given quaternion matrix $A$ and the conjugate transpose of $A$ and the transposed of $A$, respectively. $I$ and $0$ are the identity matrix and zero matrix with appropriate sizes, respectively. The Moore-Penrose inverse
$A^{\dag}$ of a quaternion matrix $A$, is defined to be the unique matrix $A^{\dag},$
such that
\begin{align*}
\text{(i)}~AA^{\dag}A=A,~\text{(ii)}~A^{\dag}AA^{\dag}=A^{\dag},~\text{(iii)}%
~(AA^{\dag})^{\ast}=AA^{\dag},~\text{(iv)}~(A^{\dag}A)^{\ast}=A^{\dag}A.
\end{align*}Furthermore, $L_{A}$ and $R_{A}$ stand for the projectors $L_{A}%
=I-A^{\dag}A$ and $R_{A}=I-AA^{\dag}$ induced by $A$, respectively. For more definitions and properties of quaternions, we refer the reader to the book \cite{rodman}.

In order to solve the system (\ref{mainsystem01}), we need the solvability conditions and general solution to the quaternion matrix equation
\begin{align}\label{july25equ001}
A_{1}X_{1}+Y_{1}B_{1}+C_{1}Z_{1}D_{1}+F_{1}Z_{2}G_{1}=E_{1},
\end{align}where $A_{1},B_{1},C_{1},D_{1},F_{1},G_{1},E_{1}$ are given matrices, and $X_{1},Y_{1},Z_{1},Z_{2}$ are unknowns.

\begin{lemma}\label{lemma01} (\cite{hewang4}, \cite{wanghe4444444}).  Consider the quaternion matrix equation (\ref{july25equ001}). Set
\begin{align*}
A_{11}=R_{A_{1}}C_{1},~B_{11}=D_{1}L_{B_{1}},~C_{11}=R_{A_{1}}F_{1},~D_{11}=G_{1}L_{B_{1}},
\end{align*}
\begin{align*}
E_{11}=R_{A_{1}}E_{1}L_{B_{1}},~M_{11}=R_{A_{11}}C_{11},~N_{11}=D_{11}L_{B_{11}},~S_{11}=C_{11}L_{M_{11}}.
\end{align*}
Then the equation
(\ref{july25equ001}) is consistent if and only if
\begin{align*}
R_{M_{11}}R_{A_{11}}E_{11}=0,  ~E_{11}L_{B_{11}}L_{N_{11}}=0, ~R_{A_{11}}E_{11}L_{D_{11}}=0,~  R_{C_{11}}E_{11}L_{B_{11}}=0.
\end{align*}
In this case, the general solution to (\ref{july25equ001}) can be
expressed as
\begin{align*}
X_{1}=A_{1}^{\dag}(E_{1}-C_{1}Z_{1}D_{1}-F_{1}Z_{2}G_{1})-T_{17}B_{1}+L_{A_{1}}T_{16},
\end{align*}
\begin{align*}
Y_{1}=R_{A_{1}}(E_{1}-C_{1}Z_{1}D_{1}-F_{1}Z_{2}G_{1})B_{1}^{\dag}%
+A_{1}T_{17}+T_{18}R_{B_{1}},
\end{align*}
\begin{align*}
Z_{1}=&
A_{11}^{\dag}E_{11}B_{11}^{\dag}-A_{11}^{\dag}C_{11}M_{11}^{\dag}E_{11}B_{11}^{\dag}-A_{11}^{\dag}S_{11}C_{11}^{\dag
}E_{11}N_{11}^{\dag}D_{11}B_{11}^{\dag}-A_{11}^{\dag}S_{11}T_{12}R_{N_{11}}D_{11}B_{11}^{\dag}\\&+L_{A_{11}}T_{14}+T_{15}
R_{B_{11}},
\end{align*}
\begin{align*}
Z_{2}=M_{11}^{\dag}E_{11}D_{11}^{\dag}+S_{11}^{\dag}S_{11}C_{11}^{\dag}E_{11}N_{11}^{\dag}+L_{M_{11}}L_{S_{11}}T_{11}%
+L_{M_{11}}T_{12}R_{N_{11}}+T_{13}R_{D_{11}},
\end{align*}
where $T_{11},\ldots,T_{18}$ are arbitrary matrices over $\mathbb{H}$
with appropriate sizes.
\end{lemma}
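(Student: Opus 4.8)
The plan is to solve (\ref{july25equ001}) by a cascade of eliminations: first remove the ``free'' unknowns $X_1$ and $Y_1$ to reduce to a generalized Sylvester equation in $Z_1$ and $Z_2$, then remove $Z_1$ to reduce to a pair of one-variable equations in $Z_2$, and finally back-substitute. First I would eliminate $X_1$ and $Y_1$ simultaneously. Since $R_{A_1}A_1=0$ and $B_1L_{B_1}=0$, multiplying (\ref{july25equ001}) on the left by $R_{A_1}$ and on the right by $L_{B_1}$ annihilates the terms $A_1X_1$ and $Y_1B_1$ and yields
\begin{equation*}
A_{11}Z_1B_{11}+C_{11}Z_2D_{11}=E_{11},
\end{equation*}
with the abbreviations $A_{11}=R_{A_1}C_1$, $B_{11}=D_1L_{B_1}$, $C_{11}=R_{A_1}F_1$, $D_{11}=G_1L_{B_1}$, $E_{11}=R_{A_1}E_1L_{B_1}$ of the statement. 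This reduction is reversible: once a pair $(Z_1,Z_2)$ solving the reduced equation is found, the remaining equation $A_1X_1+Y_1B_1=E_1-C_1Z_1D_1-F_1Z_2G_1=:\tilde E$ is automatically consistent, because applying $R_{A_1}(\cdot)L_{B_1}$ to $\tilde E$ gives exactly $E_{11}-A_{11}Z_1B_{11}-C_{11}Z_2D_{11}=0$. Hence (\ref{july25equ001}) is solvable iff the reduced equation is, and $X_1,Y_1$ are recovered from the standard general solution of $A_1X_1+Y_1B_1=\tilde E$ (consistent precisely when $R_{A_1}\tilde EL_{B_1}=0$), which is exactly the displayed parametrization with free matrices $T_{16},T_{17},T_{18}$.

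It then remains to solve $A_{11}Z_1B_{11}+C_{11}Z_2D_{11}=E_{11}$. Here I would eliminate $Z_1$ in the same spirit. Viewing it as $A_{11}Z_1B_{11}=E_{11}-C_{11}Z_2D_{11}$, the equation in $Z_1$ (of the form $A_{11}Z_1B_{11}=F$) is consistent iff $R_{A_{11}}F=0$ and $FL_{B_{11}}=0$, with general solution $Z_1=A_{11}^\dag F B_{11}^\dag+L_{A_{11}}T_{14}+T_{15}R_{B_{11}}$. These two consistency conditions translate into a pair of matrix equations in the single unknown $Z_2$, namely
\begin{equation*}
M_{11}Z_2D_{11}=R_{A_{11}}E_{11},\qquad C_{11}Z_2N_{11}=E_{11}L_{B_{11}},
\end{equation*}
where $M_{11}=R_{A_{11}}C_{11}$ and $N_{11}=D_{11}L_{B_{11}}$. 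Solving this pair (introducing $S_{11}=C_{11}L_{M_{11}}$) produces the four range conditions listed in the statement and the explicit formula for $Z_2$; substituting $F=E_{11}-C_{11}Z_2D_{11}$ into $Z_1=A_{11}^\dag FB_{11}^\dag+L_{A_{11}}T_{14}+T_{15}R_{B_{11}}$ then yields the stated expression for $Z_1$, in which the terms built from $M_{11}^\dag,S_{11},C_{11}^\dag,N_{11}^\dag$ are precisely the contribution of $C_{11}Z_2D_{11}$.

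The step I expect to be the main obstacle is this last one: showing that the two equations $M_{11}Z_2D_{11}=R_{A_{11}}E_{11}$ and $C_{11}Z_2N_{11}=E_{11}L_{B_{11}}$ are \emph{jointly} consistent under exactly the four listed conditions, i.e.\ that the coupling compatibility condition normally attached to a system of two one-variable matrix equations is here redundant thanks to the special forms $M_{11}=R_{A_{11}}C_{11}$ and $N_{11}=D_{11}L_{B_{11}}$. Once joint consistency is pinned down, the heavy part is assembling a single closed form for $Z_2$ that correctly parametrizes \emph{all} common solutions. Throughout this I would lean repeatedly on the projector identities $AA^\dag A=A$, $R_AA=0$, $AL_A=0$ and on the range/kernel compatibilities among $M_{11},N_{11},S_{11}$ to collapse the two-equation solution into the compact displayed form, and to verify that the arbitrary matrices $T_{11},\dots,T_{15}$ together with $T_{16},T_{17},T_{18}$ sweep out every solution of (\ref{july25equ001}).
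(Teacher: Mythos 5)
First, note that the paper does not prove this lemma at all: it is quoted verbatim from \cite{hewang4} and \cite{wanghe4444444}, so there is no in-paper proof to match your argument against; your proposal has to stand on its own. Your skeleton is the standard (and correct) route, and it is essentially how the cited works proceed: multiplying \eqref{july25equ001} by $R_{A_1}$ and $L_{B_1}$ to strip off $X_1,Y_1$, reducing to $A_{11}Z_1B_{11}+C_{11}Z_2D_{11}=E_{11}$, then stripping off $Z_1$ to land on the pair $M_{11}Z_2D_{11}=R_{A_{11}}E_{11}$, $C_{11}Z_2N_{11}=E_{11}L_{B_{11}}$. The reductions you do carry out are correct and reversible for the reasons you give, and the four displayed conditions are indeed exactly the individual consistency conditions of those two one-variable equations.

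The genuine gap is precisely the step you flag and then leave undone: you never prove that the two equations in $Z_2$ have a \emph{common} solution under only those four conditions, nor do you derive the closed-form expression for the general common solution. In general the system $A_3XB_3=C_3$, $A_4XB_4=C_4$ needs a fifth compatibility condition beyond the four individual ones, and the whole content of the lemma (including the shape of the $Z_2$-formula with $S_{11}^{\dag}S_{11}C_{11}^{\dag}E_{11}N_{11}^{\dag}$ and the free parameters $L_{M_{11}}L_{S_{11}}T_{11}+L_{M_{11}}T_{12}R_{N_{11}}+T_{13}R_{D_{11}}$) lives in showing that condition is automatic here. The structural reason it degenerates is concrete and you should exhibit it: since $M_{11}=R_{A_{11}}C_{11}$ and $N_{11}=D_{11}L_{B_{11}}$, premultiplying the second equation by $R_{A_{11}}$ and postmultiplying the first by $L_{B_{11}}$ both yield $M_{11}Z_2N_{11}=R_{A_{11}}E_{11}L_{B_{11}}$, so the two equations agree on their ``overlap''; one then invokes (or reproves) the common-solution lemma for a pair of equations $A_3XB_3=C_3$, $A_4XB_4=C_4$ to get the parametrization of $Z_2$, and only after that does back-substitution produce the stated $Z_1$. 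Without carrying out that step, your argument establishes necessity of the four conditions but not sufficiency, and does not justify the displayed general solution.
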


The following lemma is due to Marsaglia and Styan \cite{GPH}, which can be
generalized to the quaternion algebra.

\begin{lemma}
\label{hlemma03}(\cite{GPH}). Let $A\in\mathbb{H}^{m\times n},B\in
\mathbb{H}^{m\times k},$ and $C\in\mathbb{H}^{l\times
n},$ be given. Then\newline
$(1)~ r(A)+r(R_{A}B)=r(B)+r(R_{B}A)=r\begin{pmatrix}A&B\end{pmatrix}.$\newline
$ (2)~ r(A)+r(CL_{A})=r(C)+r(AL_{C})=r%
\begin{pmatrix}
A\\
C
\end{pmatrix}
.$
\end{lemma}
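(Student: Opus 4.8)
The plan is to establish part $(1)$ first and then obtain part $(2)$ from it by conjugate transposition, so the only real work is in the first identity. For part $(1)$ I would start from a single elementary block column operation that clears the $A$-component out of $B$: right-multiplying $\begin{pmatrix}A&B\end{pmatrix}$ by the invertible matrix $\begin{pmatrix}I&-A^{\dag}B\\0&I\end{pmatrix}$ produces $\begin{pmatrix}A&B-AA^{\dag}B\end{pmatrix}=\begin{pmatrix}A&R_{A}B\end{pmatrix}$. Since the multiplier is invertible and rank is invariant under multiplication by invertible matrices, this gives $r\begin{pmatrix}A&B\end{pmatrix}=r\begin{pmatrix}A&R_{A}B\end{pmatrix}$ at no cost.

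The next step is to show that the two column blocks of $\begin{pmatrix}A&R_{A}B\end{pmatrix}$ are independent, i.e.\ that $r\begin{pmatrix}A&R_{A}B\end{pmatrix}=r(A)+r(R_{A}B)$. I would reduce this to $\mathrm{Col}(A)\cap\mathrm{Col}(R_{A}B)=\{0\}$: if $Au=R_{A}Bv$, then left-multiplying by $AA^{\dag}$ turns the left side into $Au$ (using $AA^{\dag}A=A$) and the right side into $0$ (using $AA^{\dag}R_{A}=AA^{\dag}-AA^{\dag}AA^{\dag}=0$), which forces $Au=R_{A}Bv=0$. Because $\mathbb{H}$ is a division ring, the subspace dimension formula $\dim(U+V)=\dim U+\dim V-\dim(U\cap V)$ holds for right $\mathbb{H}$-subspaces, so trivial intersection yields the additivity. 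The middle expression $r(B)+r(R_{B}A)$ then follows by symmetry, since a column permutation gives $r\begin{pmatrix}A&B\end{pmatrix}=r\begin{pmatrix}B&A\end{pmatrix}$ and the same argument applies with the roles of $A$ and $B$ interchanged.

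For part $(2)$ I would avoid repeating the construction and instead dualize. Using the quaternion Moore-Penrose identity $(A^{\ast})^{\dag}=(A^{\dag})^{\ast}$ together with the Hermiticity of $A^{\dag}A$, one checks $L_{A}=I-A^{\dag}A=I-(A^{\dag}A)^{\ast}=I-A^{\ast}(A^{\ast})^{\dag}=R_{A^{\ast}}$. Since $r(M)=r(M^{\ast})$ for every quaternion matrix, $r\begin{pmatrix}A\\C\end{pmatrix}=r\begin{pmatrix}A^{\ast}&C^{\ast}\end{pmatrix}$, and applying part $(1)$ to $A^{\ast},C^{\ast}$ gives $r(A^{\ast})+r(R_{A^{\ast}}C^{\ast})=r(A)+r(L_{A}C^{\ast})=r(A)+r(CL_{A})$ after transposing back and using $L_{A}=L_{A}^{\ast}$. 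The remaining equality $r(C)+r(AL_{C})$ comes from the symmetric grouping in exactly the same way.

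The substantive content is thus confined to the two Moore-Penrose bookkeeping facts ($AA^{\dag}R_{A}=0$ and $L_{A}=R_{A^{\ast}}$) and to knowing that the classical rank machinery survives over $\mathbb{H}$. The main point demanding care is noncommutativity: the elementary multipliers must act on the correct side, column and row spaces must be treated consistently as right and left $\mathbb{H}$-modules, and the identity $L_{A}=R_{A^{\ast}}$ must be verified without silently commuting scalars past matrices. Once these are in hand there is no genuine obstacle, since rank invariance under invertible multiplication, the dimension formula for subspaces, and $r(M)=r(M^{\ast})$ all carry over verbatim to quaternion matrices.
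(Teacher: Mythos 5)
Your argument is correct. The paper itself offers no proof of this lemma: it simply cites Marsaglia--Styan and asserts that the result ``can be generalized to the quaternion algebra,'' so your write-up supplies exactly the verification the paper leaves implicit. The two halves of your plan are both sound: the block column operation with the invertible multiplier $\left(\begin{smallmatrix}I&-A^{\dag}B\\0&I\end{smallmatrix}\right)$ legitimately converts $\begin{pmatrix}A&B\end{pmatrix}$ into $\begin{pmatrix}A&R_{A}B\end{pmatrix}$, the computation $AA^{\dag}R_{A}=0$ does force the column spaces (as right $\mathbb{H}$-submodules) to meet trivially, and the dimension formula is available because $\mathbb{H}$ is a division ring. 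The dualization step is also handled correctly: you use the conjugate transpose rather than the plain transpose (essential, since $r(M)=r(M^{T})$ fails over $\mathbb{H}$), and the identities $(A^{\ast})^{\dag}=(A^{\dag})^{\ast}$, $L_{A}=L_{A}^{\ast}=R_{A^{\ast}}$ all follow from the four Penrose conditions, which characterize the Moore--Penrose inverse over $\mathbb{H}$ just as over $\mathbb{C}$. The only stylistic remark is that Marsaglia and Styan's original (complex) proof proceeds by explicit rank-preserving block factorizations rather than through the subspace-intersection argument you use for additivity, but over a division ring the two routes are equivalent and yours is, if anything, more transparent about where noncommutativity could have intervened.
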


\section{\textbf{Solvability conditions to the system (\ref{mainsystem01})}}
The goal of this section is to consider the solvability conditions to the system (\ref{mainsystem01}) in terms of rank equalities. 
\begin{theorem}\label{theorem31}
	The system (\ref{mainsystem01}) is consistent if and only if the following rank equalities hold for all $i=\overline{1,k},~1\le m\le n\le k$:
\begin{align}\label{july25equ002}
r\begin{pmatrix}E_{i}&A_{i}&C_{i}&F_{i}\\B_{i}&0&0&0\end{pmatrix}=r\begin{pmatrix}A_{i}&C_{i}&F_{i}\end{pmatrix}+r(B_{i}),~
r\begin{pmatrix}E_{i}&A_{i}\\B_{i}&0\\D_{i}&0\\G_{i}&0\end{pmatrix}=r\begin{pmatrix}B_{i}\\D_{i}\\G_{i}\end{pmatrix}+r(A_{i}),
\end{align}
\begin{align}\label{july25equ003}
r\begin{pmatrix}E_{i}&A_{i}&C_{i}\\B_{i}&0&0\\G_{i}&0&0\end{pmatrix}=r\begin{pmatrix}A_{i}&C_{i}\end{pmatrix}+r\begin{pmatrix}B_{i}\\G_{i}\end{pmatrix},~
r\begin{pmatrix}E_{i}&A_{i}&F_{i}\\B_{i}&0&0\\D_{i}&0&0\end{pmatrix}=r\begin{pmatrix}A_{i}&F_{i}\end{pmatrix}+r\begin{pmatrix}B_{i}\\D_{i}\end{pmatrix},
\end{align}
\begin{align}\label{july25equ004}
&r\begin{pmatrix}
\begin{smallmatrix}
C_{m}&E_{m}&F_{m}& & & & &A_{m}& & & \\
&G_{m}& &D_{m+1}& & & & & & & \\
& &C_{m+1}&-E_{m+1}&F_{m+1}& & & &A_{m+1}& & \\
& & &\ddots&\ddots&\ddots & & & &\ddots& \\
& & & &C_{n}&(-1)^{n-m}E_{n}&F_{n}& & & &A_{n}\\
&B_{m}& & & & & & & & & \\
& & &B_{m+1}& & & & & & & \\
& & & &\ddots & & & & & & \\
& & & & &B_{n}& & & & & 
\end{smallmatrix}
\end{pmatrix}\nonumber\\
&=r\begin{pmatrix}
\begin{smallmatrix}
C_{m}&F_{m}& & &A_{m}& & & \\
&C_{m+1}&F_{m+1}& & &A_{m+1}& & \\
& &\ddots&\ddots & & &\ddots & \\
& & &C_n &F_n& & &A_n
\end{smallmatrix}
\end{pmatrix}
+r\begin{pmatrix}
\begin{smallmatrix}
G_{m}&D_{m+1}& & & \\
&G_{m+1}&D_{m+2}& & \\
& &\ddots&\ddots& \\
& & &G_{n-1}&D_{n}\\
B_{m}& & & & \\
&B_{m+1}& & & \\
& & &\ddots& \\
& & & &B_{n}
\end{smallmatrix}
\end{pmatrix},
\end{align}

\begin{align}\label{july25equ005}
&r\begin{pmatrix}
\begin{smallmatrix}
D_{m}& & & & & & & & & \\
E_{m}&F_{m}& & & & &A_m& & & \\
G_{m}& &D_{m+1}& & & & & & & \\
&C_{m+1}&-E_{m+1}&F_{m+1}& & &&A_{m+1}&&\\
& &G_{m+1}& &\ddots && & &\ddots &\\
& & &C_{m+2}&\ddots &D_n& & &&A_n\\
& & & &\ddots &(-1)^{n-m}E_n& & &&\\
& & & & &G_n& & &&\\
B_{m}& & & & & & & &&\\
& &B_{m+1}& & & & & &&\\
& & &  &\ddots & & & &&\\
& & &  & &B_n& & &&
\end{smallmatrix}
\end{pmatrix}\nonumber\\
&=r\begin{pmatrix}
\begin{smallmatrix}
F_{m}& & & &A_m&&\\
C_{m+1}&F_{m+1}& & & &A_{m+1}&\\
&C_{m+2}&\ddots&F_{n-1}& &&\ddots \\
& &\ddots&C_n& &&&A_n
\end{smallmatrix}
\end{pmatrix}
+r\begin{pmatrix}
\begin{smallmatrix}
D_{m}& & \\
G_{m}&D_{m+1}& \\
&G_{m+1}&\ddots&\\
& &\ddots &D_n\\
& & &G_n\\
B_{m}& & \\
&B_{m+1}& \\
& &\ddots\\
& & &B_n
\end{smallmatrix}
\end{pmatrix},
\end{align}
\begin{align}\label{july25equ006}
&r\begin{pmatrix}
\begin{smallmatrix}
C_{m}&E_{m}&F_{m}& & & & &A_m& &\\
&G_{m}& &D_{m+1}& & & & & &\\
& &C_{m+1}&-E_{m+1}&F_{m+1}& & & &A_{m+1}&\\
& & &G_{m+1}& &D_{m+2}& & & &\\
&& & &\ddots &\ddots &\ddots & &  &\ddots \\
& & & & & &D_n& & & \\
& & & & &C_n&(-1)^{n-m}E_n& & &&A_n\\
& & & & & &G_n& & &\\
&B_{m}& & & & & & & &\\
& & &B_{m+1}& & & & & &\\
& & & & &\ddots & & & &\\
& & & & & &B_n& & &
\end{smallmatrix}
\end{pmatrix}\nonumber
\end{align}
\begin{align}
&=
r\begin{pmatrix}
\begin{smallmatrix}
C_{m}&F_{m}& &&A_{m}& & \\
&C_{m+1}&F_{m+1}& &&A_{m+1}& \\
& &\ddots  & & &&\ddots&\\
& &  &C_n&&&&A_{n}\\
\end{smallmatrix}
\end{pmatrix}
+r\begin{pmatrix}
\begin{smallmatrix}
G_{m}&D_{m+1}& \\
&G_{m+1}&D_{m+2}\\
& &\ddots \\
& &\ddots &D_n\\
& & &G_n\\
B_{m}& & \\
&B_{m+1}& \\
& &\ddots\\
& &&B_n
\end{smallmatrix}
\end{pmatrix},
\end{align}

\begin{align}\label{july25equ007}
&r\begin{pmatrix}
\begin{smallmatrix}
D_{m}& & & & & & & & \\
E_{m}&F_{m}& & & & &&A_{m}& & \\
G_{m}& &D_{m+1}& & & & & & \\
&C_{m+1}&-E_{m+1}&F_{m+1}& & & &&A_{m+1}& \\
& &G_{m+1}& &D_{3}& & & &&\ddots \\
& &&\ddots &\ddots &\ddots & && & \\
& & & &C_{n}&(-1)^{n-m}E_{n}&F_n& &&&A_{n}\\
B_{m}& & & & & & & & \\
& &B_{m+1}& & & & & & \\
& &  &\ddots && & & & \\
& & & & &B_n& & & 
\end{smallmatrix}
\end{pmatrix}\nonumber\\
&=r\begin{pmatrix}
\begin{smallmatrix}
F_{m}& & &&A_{m}& & \\
C_{m+1}&F_{m+1}& & &&A_{m+1}& \\
&\ddots &\ddots & & &&\ddots \\
& &C_{n}&F_n& &&&A_{n}
\end{smallmatrix}
\end{pmatrix}
+r\begin{pmatrix}
\begin{smallmatrix}
D_{m}& & \\
G_{m}&D_{m+1}& \\
&G_{m+1}&D_{m+2}\\
&&\ddots&\ddots \\
&&&G_{n}&D_{n}\\
B_{m}& & \\
&B_{m+1}& \\
&&\ddots \\
& &&&B_{n}
\end{smallmatrix}
\end{pmatrix},
\end{align}
where the blank entries in above rank equalities are all zeros.
\end{theorem}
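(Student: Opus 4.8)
The plan is to proceed in two stages: first reduce the full system \eqref{mainsystem01} to a coupled chain of two-term equations in the shared variables $Z_1,\dots,Z_{k+1}$ alone, and then analyse that chain by induction on $k$. Throughout I would translate the resulting projector identities into the stated ranks by repeated use of Lemma~\ref{hlemma03}.

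\textbf{Stage 1 (elimination of the private unknowns).} For each $i$ the block $A_iX_i+Y_iB_i$ can absorb anything lying in the column space of $A_i$ or the row space of $B_i$. Hence, with $Z_i,Z_{i+1}$ regarded as momentarily fixed, the $i$-th equation is solvable for $X_i,Y_i$ if and only if $R_{A_i}\bigl(E_i-C_iZ_iD_i-F_iZ_{i+1}G_i\bigr)L_{B_i}=0$, by the classical criterion for $AX+YB=C$. Writing, as in Lemma~\ref{lemma01}, $A_{i1}=R_{A_i}C_i$, $B_{i1}=D_iL_{B_i}$, $C_{i1}=R_{A_i}F_i$, $D_{i1}=G_iL_{B_i}$ and $E_{i1}=R_{A_i}E_iL_{B_i}$, the system \eqref{mainsystem01} is consistent if and only if the reduced chain
\begin{align*}
A_{i1}Z_iB_{i1}+C_{i1}Z_{i+1}D_{i1}=E_{i1},\qquad i=\overline{1,k},
\end{align*}
admits a common solution $Z_1,\dots,Z_{k+1}$; the $X_i,Y_i$ are then recovered exactly as in Lemma~\ref{lemma01}. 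This isolates the genuine difficulty, namely the coupling through the shared central variables.

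\textbf{Stage 2a (necessity).} Assuming a solution exists, I would prove each listed rank equality by elementary block row/column operations. The equalities \eqref{july25equ002}--\eqref{july25equ003} are the rank forms, via Lemma~\ref{hlemma03}, of the single-equation solvability of the $i$-th relation with the two $Z$-products relaxed to free matrices; they hold because the full system in particular solves equation $i$. For the banded equalities \eqref{july25equ004}--\eqref{july25equ007}, I would assemble the matrices on their left-hand sides and use a fixed solution $(Z_m,\dots,Z_{n+1})$ to clear every $E_j$ block: adding suitable multiples of the $A_j$- and $C_j,F_j$-columns and of the $B_j$-, $D_j,G_j$-rows annihilates the central data, after which the matrix splits into the two independent blocks whose ranks appear on the right. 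Each of the four families should correspond to one of the four projector conditions of Lemma~\ref{lemma01} for the sub-chain indexed by $[m,n]$.

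\textbf{Stage 2b (sufficiency) and the main obstacle.} Rather than the forward substitution used for $k=1,2$, which becomes unmanageable because the free parameters produced in $Z_{i+1}$ by equation $i$ must be tracked through all later equations, I would argue by induction on $k$. Suppose the sub-chain on $[1,k-1]$ is solvable, and write its general solution so that $Z_k$ carries explicit free parameters $T$. The remaining equation $A_{k1}Z_kB_{k1}+C_{k1}Z_{k+1}D_{k1}=E_{k1}$ is solvable for $Z_{k+1}$, with $Z_k$ drawn from that general solution, precisely when $R_{C_{k1}}\bigl(E_{k1}-A_{k1}Z_kB_{k1}\bigr)=0$ and $\bigl(E_{k1}-A_{k1}Z_kB_{k1}\bigr)L_{D_{k1}}=0$; these are two linear conditions on $T$. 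I would show, again through Lemma~\ref{hlemma03}, that they can be met simultaneously exactly when the new rank equalities carrying the index $n=k$ in \eqref{july25equ004}--\eqref{july25equ007} hold, the older indices $1\le m\le n\le k-1$ being supplied by the induction hypothesis. The hard part is Stage 2b: propagating the free parameters of the $[1,k-1]$ general solution into the coupling conditions for the last equation and recognising that their joint solvability is governed \emph{exactly} by the banded rank equalities with $n=k$. This requires a sufficiently explicit general solution in which the dependence of $Z_k$ on $T$ is transparent, together with the patient reduction of the resulting Moore--Penrose identities into ranks of augmented matrices built directly from $A_i,B_i,C_i,D_i,F_i,G_i,E_i$; matching the four banded families to the four projector conditions for every interval $[m,n]$ is where the bulk of the bookkeeping lies.
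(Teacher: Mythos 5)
Your Stage 1 reduction and the necessity argument of Stage 2a are reasonable and consistent with what the paper does implicitly through Lemma~\ref{lemma01}. The genuine gap is in Stage 2b. Your induction hypothesis is the statement of Theorem~\ref{theorem31} itself, i.e.\ a set of rank equalities characterizing \emph{solvability} of the sub-chain on $[1,k-1]$; it does not hand you a parametrized \emph{general solution} of that sub-chain, let alone one in which the dependence of $Z_k$ on the free parameters $T$ is transparent. Yet your entire sufficiency step consists of substituting such a general solution into the last equation and analysing two linear conditions on $T$. Producing an explicit general solution for a chain of $k-1$ coupled equations is precisely the obstruction the paper flags in its introduction (it is ``hard to solve Theorem~\ref{theorem31} by using the same approach'' as for $k=1,2$), and nothing in your outline supplies it. Even granting it, you give no mechanism by which the two conditions $R_{C_{k1}}\bigl(E_{k1}-A_{k1}Z_kB_{k1}\bigr)=0$ and $\bigl(E_{k1}-A_{k1}Z_kB_{k1}\bigr)L_{D_{k1}}=0$ would reproduce all four banded families (\ref{july25equ004})--(\ref{july25equ007}) for every interval $[m,n]$ with $n=k$; that identification is asserted, not argued, and it is where the theorem actually lives.

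The paper sidesteps exactly this difficulty with a different induction. It applies Lemma~\ref{lemma01} to each equation separately, obtaining from equation $i$ an explicit expression for $Z_{i+1}$ carrying free parameters $T_{i1},T_{i2},T_{i3}$, and from equation $i+1$ a second explicit expression for the same $Z_{i+1}$ carrying parameters $T_{i+1,2},T_{i+1,4},T_{i+1,5}$. Equating these two expressions for $i=1,\dots,k-1$ yields the compatibility system (\ref{july25equ014}), which is again a system of $k-1$ Sylvester-type equations of the very same shape $\widehat{A}_{i}U+V\widehat{B}_{i}+\widehat{C}_{i}T_{i2}\widehat{D}_{i}+\widehat{F}_{i}T_{i+1,2}\widehat{G}_{i}=\widehat{E}_{i}$ in the parameters. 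Because the transformed system has the same form, the induction hypothesis --- solvability conditions only, no general solution needed --- applies to it directly, and the remaining work is the (long but mechanical) rank bookkeeping that translates the hatted conditions (\ref{july25equ017})--(\ref{july25equ022}) back into (\ref{july25equ002})--(\ref{july25equ007}) via the facts (\ref{april15equ344})--(\ref{april15equ352}) and Lemma~\ref{hlemma03}. To repair your outline you should either adopt this ``compatibility system of the same type'' device in place of your Stage 2b, or strengthen your induction hypothesis to include an explicit general solution of the sub-chain and actually prove that stronger statement.
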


\begin{proof}
The proof is by mathematical induction on $n$. For $n=1$, the statement is clear. We assume that the
(\ref{july25equ002})--(\ref{july25equ007}) is true for $n=k-1$. So next we will show that the induction is true  on $n=k$.

We divide the system (\ref{mainsystem01}) into the following $k$ equations:
\begin{align}
A_{1}X_{1}&+Y_{1}B_{1}+C_{1}Z_{1}D_{1}+F_{1}Z_{2}G_{1}=E_{1}, \label{july25equ008}\\
A_{2}X_{2}&+Y_{2}B_{2}+C_{2}Z_{2}D_{2}+F_{2}Z_{3}G_{2}=E_{2},\label{july25equ009}\\
&\vdots\nonumber\\
A_{k}X_{k}&+Y_{k}B_{k}+C_{k}Z_{k}D_{k}+F_{k}Z_{k+1}G_{k}=E_{k}.\label{july25equ010}
\end{align}
Applying Lemma \ref{lemma01}, the equations (\ref{july25equ008})--(\ref{july25equ010}) are consistent if and only if rank equalities (\ref{july25equ002}),  (\ref{july25equ003}) hold. Put $A_{ii}=R_{A_{i}}C_{i},~B_{ii}=D_{i}L_{B_{i}},~C_{ii}=R_{A_{i}}F_{i},~D_{ii}=G_{i}L_{B_{i}},~E_{ii}=R_{A_{i}}E_{i}L_{B_{i}},~M_{ii}=R_{A_{ii}}C_{ii},~N_{ii}=D_{ii}L_{B_{ii}},~S_{ii}=C_{ii}L_{M_{ii}},~(i=\overline{1,k}),$ the general solution of equation 
\begin{equation}\label{july25equ011}
A_{i}X_{i}+Y_{i}B_{i}+C_{i}Z_{i}D_{i}+F_{i}Z_{i+1}G_{i}=E_{i}
\end{equation}
can be expressed as
\begin{align*}
X_{ i}=A_{ i}^{\dag}(E_{ i}-C_{ i}Z_{ i}D_{ i}-F_{ i}Z_{ i+ i}G_{ i})-T_{ i7}B_{ i}+L_{A_{ i}}T_{ i6},
\end{align*}
\begin{align*}
Y_{ i}=R_{A_{ i}}(E_{ i}-C_{ i}Z_{ i}D_{ i}-F_{ i}Z_{ i+ i}G_{ i})B_{ i}^{\dag}%
+A_{ i}T_{ i7}+T_{ i8}R_{B_{ i}},
\end{align*}
\begin{align}\label{july25equ012}
Z_{ i}=&
A_{ i i}^{\dag}E_{ i i}B_{ i i}^{\dag}-A_{ i i}^{\dag}C_{ i i}M_{ i i}^{\dag}E_{ i i}B_{ i i}^{\dag}-A_{ i i}^{\dag}S_{ i i}C_{ i i}^{\dag
}E_{ i i}N_{ i i}^{\dag}D_{ i i}B_{ i i}^{\dag}-A_{ i i}^{\dag}S_{ i i}T_{ i 2}R_{N_{ i i}}D_{ i i}B_{ i i}^{\dag}\nonumber\\&+L_{A_{ i i}}T_{ i4}+T_{ i5}
R_{B_{ i i}},
\end{align}
\begin{align}\label{july25equ013}
Z_{ i+ 1}=M_{ i i}^{\dag}E_{ i i}D_{ i i}^{\dag}+S_{ i i}^{\dag}S_{ i i}C_{ i i}^{\dag}E_{ i i}N_{ i i}^{\dag}+L_{M_{ i i}}L_{S_{ i i}}T_{ i 1}%
+L_{M_{ i i}}T_{ i 2}R_{N_{ i i}}+T_{ i3}R_{D_{ i i}},
\end{align}
where $T_{i1},\ldots,T_{i8}$ are arbitrary matrices over $\mathbb{H}$
with appropriate sizes.

Let $Z_i$ in the $(i+1)$th equation be equal to $Z_{i+1}$ in the $i$th equation, $0\le i\le k$ we can obtain the following system
 \begin{equation}\label{july25equ014}
\left\{\begin{array}{c}
\widehat{A}_{1}\begin{pmatrix}T_{11}\\T_{24}\end{pmatrix}+\begin{pmatrix}T_{13}&T_{25}\end{pmatrix}\widehat{B}_{1}
+\widehat{C}_{1}T_{12}\widehat{D}_{1}+\widehat{F}_{1}T_{22}\widehat{G}_{1}=\widehat{E}_{1},\\
\widehat{A}_{2}\begin{pmatrix}T_{21}\\T_{34}\end{pmatrix}+\begin{pmatrix}T_{23}&T_{35}\end{pmatrix}\widehat{B}_{2}
+\widehat{C}_{2}T_{22}\widehat{D}_{2}+\widehat{F}_{2}T_{32}\widehat{G}_{2}=\widehat{E}_{2},\\
\vdots\\
\widehat{A}_{k-1}\begin{pmatrix}T_{k-1,1}\\T_{k4}\end{pmatrix}+\begin{pmatrix}T_{k-1,3}&T_{k5}\end{pmatrix}\widehat{B}_{k-1}
+\widehat{C}_{k-1}T_{k-1,2}\widehat{D}_{k-1}+\widehat{F}_{k-1}T_{k2}\widehat{G}_{k-1}=\widehat{E}_{k-1},\\
\end{array}
\right.
\end{equation}where 
\begin{align}\label{july25equ015}
\widehat{A}_{k-1}=&\begin{pmatrix}L_{M_{k-1,k-1}}L_{S_{k-1,k-1}}&-L_{A_{kk}}\end{pmatrix},~
\widehat{B}_{k-1}=\begin{pmatrix}R_{D_{k-1,k-1}}\\-R_{B_{kk}}\end{pmatrix},
\\\widehat{C}_{k-1}=&L_{M_{k-1,k-1}},~\widehat{D}_{k-1}=R_{N_{k-1,k-1}},\widehat{F}_{k-1}=A_{kk}^{\dag}S_{kk},~\widehat{G}_{k-1}=R_{N_{kk}}D_{kk}B_{kk}^{\dag},
\end{align}
\begin{align} \label{july25equ016}
\widehat{E}_{k-1}=&A_{kk}^{\dag}E_{kk}B_{kk}^{\dag}-A_{kk}^{\dag}C_{kk}M_{kk}^{\dag}E_{kk}B_{kk}^{\dag}-A_{kk}^{\dag}S_{kk}C_{kk}^{\dag
}E_{kk}N_{kk}^{\dag}D_{kk}B_{kk}^{\dag}\nonumber\\
&-M_{k-1,k-1}^{\dag}E_{k-1,k-1}D_{k-1,k-1}^{\dag}-S_{k-1,k-1}^{\dag}S_{k-1,k-1}C_{k-1,k-1}^{\dag}E_{k-1,k-1}N_{k-1,k-1}^{\dag}.
\end{align}
Note that the system (\ref{july25equ014}) is the same as (\ref{mainsystem01}) when $n=k-1$, so  we can apply the induction to (\ref{july25equ014}). The system (\ref{july25equ014}) is consistent if and only if 
\begin{align}\label{july25equ017}
r\begin{pmatrix}\widehat{E}_{i}&\widehat{A}_{i}&\widehat{C}_{i}&\widehat{F}_{i}\\\widehat{B}_{i}&0&0&0\end{pmatrix}=r\begin{pmatrix}\widehat{A}_{i}&\widehat{C}_{i}&\widehat{F}_{i}\end{pmatrix}+r(\widehat{B}_{i}),~
r\begin{pmatrix}\widehat{E}_{i}&\widehat{A}_{i}\\\widehat{B}_{i}&0\\\widehat{D}_{i}&0\\\widehat{G}_{i}&0\end{pmatrix}=r\begin{pmatrix}\widehat{B}_{i}\\\widehat{D}_{i}\\\widehat{G}_{i}\end{pmatrix}+r(\widehat{A}_{i}),
\end{align}
\begin{align}\label{july25equ018}
r\begin{pmatrix}\widehat{E}_{i}&\widehat{A}_{i}&\widehat{C}_{i}\\\widehat{B}_{i}&0&0\\\widehat{G}_{i}&0&0\end{pmatrix}=r\begin{pmatrix}\widehat{A}_{i}&\widehat{C}_{i}\end{pmatrix}+r\begin{pmatrix}\widehat{B}_{i}\\\widehat{G}_{i}\end{pmatrix},~
r\begin{pmatrix}\widehat{E}_{i}&\widehat{A}_{i}&\widehat{F}_{i}\\\widehat{B}_{i}&0&0\\\widehat{D}_{i}&0&0\end{pmatrix}=r\begin{pmatrix}\widehat{A}_{i}&\widehat{F}_{i}\end{pmatrix}+r\begin{pmatrix}\widehat{B}_{i}\\\widehat{D}_{i}\end{pmatrix},
\end{align}
\begin{align}\label{july25equ019}
&r\begin{pmatrix}
\begin{smallmatrix}
\widehat{C}_{m}&\widehat{E}_{m}&\widehat{F}_{m}& & & & &\widehat{A}_{m}& & & \\
&\widehat{G}_{m}& &\widehat{D}_{m+1}& & & & & & & \\
& &\widehat{C}_{m+1}&-\widehat{E}_{m+1}&\widehat{F}_{m+1}& & & &\widehat{A}_{m+1}& & \\
& & &\ddots&\ddots&\ddots & & & &\ddots& \\
& & & &\widehat{C}_{n}&(-1)^{n-m}\widehat{E}_{n}&\widehat{F}_{n}& & & &\widehat{A}_{n}\\
&\widehat{B}_{m}& & & & & & & & & \\
& & &\widehat{B}_{m+1}& & & & & & & \\
& & & &\ddots & & & & & & \\
& & & & &\widehat{B}_{n}& & & & & 
\end{smallmatrix}
\end{pmatrix}\nonumber\\
&=r\begin{pmatrix}
\begin{smallmatrix}
\widehat{C}_{m}&\widehat{F}_{m}& & &\widehat{A}_{m}& & & \\
&\widehat{C}_{m+1}&\widehat{F}_{m+1}& & &\widehat{A}_{m+1}& & \\
& &\ddots&\ddots & & &\ddots & \\
& & &\widehat{C}_n &\widehat{F}_n& & &\widehat{A}_n
\end{smallmatrix}
\end{pmatrix}
+r\begin{pmatrix}
\begin{smallmatrix}
\widehat{G}_{m}&\widehat{D}_{m+1}& & & \\
&\widehat{G}_{m+1}&\widehat{D}_{m+2}& & \\
& &\ddots&\ddots& \\
& & &\widehat{G}_{n-2}&\widehat{D}_{n}\\
\widehat{B}_{m}& & & & \\
&\widehat{B}_{m+1}& & & \\
& & &\ddots& \\
& & & &\widehat{B}_{n}
\end{smallmatrix}
\end{pmatrix},
\end{align}

\begin{align}\label{july25equ020}
&r\begin{pmatrix}
\begin{smallmatrix}
\widehat{D}_{m}& & & & & & & & & \\
\widehat{E}_{m}&\widehat{F}_{m}& & & & &\widehat{A}_m& & & \\
\widehat{G}_{m}& &\widehat{D}_{m+1}& & & & & & & \\
&\widehat{C}_{m+1}&-\widehat{E}_{m+1}&\widehat{F}_{m+1}& & &&\widehat{A}_{m+1}&&\\
& &\widehat{G}_{m+1}& &\ddots && & &\ddots &\\
& & &\widehat{C}_{m+2}&\ddots &\widehat{D}_n& & &&\widehat{A}_n\\
& & & &\ddots &(-1)^{n-m}\widehat{E}_n& & &&\\
& & & & &\widehat{G}_n& & &&\\
\widehat{B}_{m}& & & & & & & &&\\
& &\widehat{B}_{m+1}& & & & & &&\\
& & &  &\ddots & & & &&\\
& & &  & &\widehat{B}_n& & &&
\end{smallmatrix}
\end{pmatrix}\nonumber\\
&=r\begin{pmatrix}
\begin{smallmatrix}
\widehat{F}_{m}& & & &\widehat{A}_m&&\\
\widehat{C}_{m+1}&\widehat{F}_{m+1}& & & &\widehat{A}_{m+1}&\\
&\widehat{C}_{m+2}&\ddots&\widehat{F}_{n-1}& &&\ddots \\
& &\ddots&\widehat{C}_n& &&&\widehat{A}_n
\end{smallmatrix}
\end{pmatrix}
+r\begin{pmatrix}
\begin{smallmatrix}
\widehat{D}_{m}& & \\
\widehat{G}_{m}&\widehat{D}_{m+1}& \\
&\widehat{G}_{m+1}&\ddots&\\
& &\ddots &\widehat{D}_n\\
& & &\widehat{G}_n\\
\widehat{B}_{m}& & \\
&\widehat{B}_{m+1}& \\
& &\ddots\\
& & &\widehat{B}_n
\end{smallmatrix}
\end{pmatrix},
\end{align}
\begin{align}\label{july25equ021}
&r\begin{pmatrix}
\begin{smallmatrix}
\widehat{C}_{m}&\widehat{E}_{m}&\widehat{F}_{m}& & & & &\widehat{A}_1& &\\
&\widehat{G}_{m}& &\widehat{D}_{m+1}& & & & & &\\
& &\widehat{C}_{m+1}&-\widehat{E}_{m+1}&\widehat{F}_{m+1}& & & &\widehat{A}_2&\\
& & &\widehat{G}_{m+1}& &\widehat{D}_{m+2}& & & &\\
&& & &\ddots &\ddots &\ddots & &  &\ddots \\
& & & & & &\widehat{D}_n& & & \\
& & & & &\widehat{C}_n&(-1)^{n-m}\widehat{E}_n& & &&\widehat{A}_n\\
& & & & & &\widehat{G}_n& & &\\
&\widehat{B}_{m}& & & & & & & &\\
& & &\widehat{B}_{m+1}& & & & & &\\
& & & & &\ddots & & & &\\
& & & & & &\widehat{B}_n& & &
\end{smallmatrix}
\end{pmatrix}\nonumber\\
&=
r\begin{pmatrix}
\begin{smallmatrix}
\widehat{C}_{m}&\widehat{F}_{m}& &&\widehat{A}_{1}& & \\
&\widehat{C}_{m+1}&\widehat{F}_{m+1}& &&\widehat{A}_{2}& \\
& &\ddots  & & &&\ddots&\\
& &  &\widehat{C}_n&&&&\widehat{A}_{n}\\
\end{smallmatrix}
\end{pmatrix}
+r\begin{pmatrix}
\begin{smallmatrix}
\widehat{G}_{m}&\widehat{D}_{m+1}& \\
&\widehat{G}_{m+1}&\widehat{D}_{m+2}\\
& &\ddots \\
& &\ddots &\widehat{D}_n\\
& & &\widehat{G}_n\\
\widehat{B}_{1}& & \\
&\widehat{B}_{2}& \\
& &\ddots\\
& &&\widehat{B}_n
\end{smallmatrix}
\end{pmatrix},
\end{align}
\begin{align}\label{july25equ022}
&r\begin{pmatrix}
\begin{smallmatrix}
\widehat{D}_{m}& & & & & & & & \\
\widehat{E}_{m}&\widehat{F}_{m}& & & & &&\widehat{A}_{1}& & \\
\widehat{G}_{m}& &\widehat{D}_{m+1}& & & & & & \\
&\widehat{C}_{m+1}&-\widehat{E}_{m+1}&\widehat{F}_{m+1}& & & &&\widehat{A}_{2}& \\
& &\widehat{G}_{m+1}& &\widehat{D}_{3}& & & &&\ddots \\
& &&\ddots &\ddots &\ddots & && & \\
& & & &\widehat{C}_{n}&(-1)^{n-m}\widehat{E}_{n}&\widehat{F}_n& &&&\widehat{A}_{3}\\
\widehat{B}_{m}& & & & & & & & \\
& &\widehat{B}_{m+1}& & & & & & \\
& &  &\ddots && & & & \\
& & & & &\widehat{B}_n& & & 
\end{smallmatrix}
\end{pmatrix}\nonumber\\
&=r\begin{pmatrix}
\begin{smallmatrix}
\widehat{F}_{m}& & &&\widehat{A}_{1}& & \\
\widehat{C}_{m+1}&\widehat{F}_{m+1}& & &&\widehat{A}_{2}& \\
&\ddots &\ddots & & &&\ddots \\
& &\widehat{C}_{n}&\widehat{F}_n& &&&\widehat{A}_{n}
\end{smallmatrix}
\end{pmatrix}
+r\begin{pmatrix}
\begin{smallmatrix}
\widehat{D}_{m}& & \\
\widehat{G}_{m}&\widehat{D}_{m+1}& \\
&\widehat{G}_{m+1}&\widehat{D}_{m+2}\\
&&\ddots&\ddots \\
&&&\widehat{G}_{n}&\widehat{D}_{n}\\
\widehat{B}_{m}& & \\
&\widehat{B}_{m+1}& \\
&&\ddots \\
& &&&\widehat{B}_{n}
\end{smallmatrix}
\end{pmatrix}.
\end{align}
Now we  prove that (\ref{july25equ017})--(\ref{july25equ022}) is equalivent to (\ref{july25equ002})--(\ref{july25equ007}). To do this it is useful to establish the following facts.
\begin{description}
  \item[Fact 1] The expressions of $\widehat{E}_{i}$  in (\ref{july25equ016}): Note that
\begin{align*}
Z_{i+1}^{1}:=M_{ii}^{\dag}E_{ii}D_{ii}^{\dag}+S_{ii}^{\dag}S_{ii}C_{ii}^{\dag}E_{ii}N_{ii}^{\dag}
\end{align*}
and
\begin{align*}
Z_{i+1}^{2}:=&A_{i+1,i+1}^{\dag}E_{i+1,i+1}B_{i+1,i+1}^{\dag}-A_{i+1,i+1}^{\dag}C_{i+1,i+1}M_{i+1,i+1}^{\dag}E_{i+1,i+1}B_{i+1,i+1}^{\dag}\\
&-A_{i+1,i+1}^{\dag}S_{i+1,i+1}C_{i+1,i+1}^{\dag
}E_{i+1,i+1}N_{i+1,i+1}^{\dag}D_{i+1,i+1}B_{i+1,i+1}^{\dag}
\end{align*}are special solutions to the equations (\ref{july25equ009}) ($k=i$ and $k=i+1$), respectively. Hence,
\begin{align}\label{april15equ344}
\widehat{E}_{i}=Z_{i+1}^{2}-Z_{i+1}^{1}.
\end{align}
  \item[Fact 2] Formulas about $S_{ii}$: From
  \begin{align}
  S_{ii}-A_{ii}A_{ii}^{\dag}S_{ii}=R_{A_{ii}}S_{ii}=R_{A_{ii}}C_{ii}L_{M_{ii}}=M_{ii}L_{M_{ii}}=0,
  \end{align}we infer that
  \begin{align}\label{april15equ347}
  A_{ii}A_{ii}^{\dag}S_{ii}=S_{ii}.
  \end{align}
  \item[Fact 3] The ranks of $r\begin{pmatrix}R_{D_{ii}}\\R_{N_{ii}}\end{pmatrix}-r(R_{N_{ii}})$: 
  Applying Lemma \ref{hlemma03} to $r\begin{pmatrix}R_{D_{ii}}\\R_{N_{ii}}\end{pmatrix}-r(R_{N_{ii}})$ gives
  \begin{align*}
  &r\begin{pmatrix}R_{D_{ii}}\\R_{N_{ii}}\end{pmatrix}-r(R_{N_{ii}})\\=&
  r\begin{pmatrix}I&D_{ii}&0\\I&0&N_{ii}\end{pmatrix}-r\begin{pmatrix}I&N_{ii}\end{pmatrix}-r(D_{ii})\\=&
  r\begin{pmatrix}D_{ii}&N_{ii}\end{pmatrix}-r(D_{ii})\xlongequal{N_{ii}=D_{ii}L_{B_{ii}}}~0.
  \end{align*}Hence, we have
  \begin{align}
  r\begin{pmatrix}R_{D_{ii}}\\R_{N_{ii}}\end{pmatrix}=r(R_{N_{ii}}),
  \end{align}i.e.,
  \begin{align}\label{april15equ350}
  R_{D_{ii}}=U_{i}R_{N_{ii}},
  \end{align}where $U_{i}$ is a matrix.
\item[Fact 4] Formulas about $R_{N_{i+1,i+1}}D_{i+1,i+1}B_{i+1,i+1}^{\dag}B_{i+1,i+1}$: Note that
\begin{align*}
&R_{N_{i+1,i+1}}D_{i+1,i+1}-R_{N_{i+1,i+1}}D_{i+1,i+1}B_{i+1,i+1}^{\dag}B_{i+1,i+1}\\&=R_{N_{i+1,i+1}}D_{i+1,i+1}L_{B_{i+1,i+1}}=R_{N_{i+1,i+1}}N_{i+1,i+1}=0,
\end{align*}
Hence, we have
\begin{align}
R_{N_{i+1,i+1}}D_{i+1,i+1}B_{i+1,i+1}^{\dag}B_{i+1,i+1}=R_{N_{i+1,i+1}}D_{i+1,i+1}.\label{april15equ352}
\end{align}
\end{description}

Firstly, we prove that   (\ref{july25equ004})--(\ref{july25equ007})$\Longleftrightarrow$  (\ref{july25equ019})--(\ref{july25equ022}) for the case $n-m=1$, respectively.  It follows from Lemma \ref{hlemma03} that
\begin{align}
 r\begin{pmatrix}\widehat{E}_{i}&\widehat{C}_{i}&\widehat{F}_{i}&\widehat{A}_{i}\\\widehat{B}_{i}&0&0&0\end{pmatrix}=
r\begin{pmatrix}\widehat{A}_{i}&\widehat{C}_{i}&\widehat{F}_{i}\end{pmatrix}+r(\widehat{B}_{i})
\end{align}
\begin{align*}
\Leftrightarrow&
 r\begin{pmatrix}
 \begin{smallmatrix}
 \widehat{E}_{i}&L_{M_{ii}}L_{S_{ii}}&L_{A_{i+1,i+1}}&L_{M_{ii}}&A_{i+1,i+1}^{\dag}S_{i+1,i+1}\\
 R_{D_{ii}}&0&0&0&0\\
 R_{B_{i+1,i+1}}&0&0&0&0
 \end{smallmatrix}
 \end{pmatrix}\\
 &=
 r\begin{pmatrix}
 \begin{smallmatrix}
 L_{M_{ii}}L_{S_{ii}}&L_{A_{i+1,i+1}}&L_{M_{ii}}&A_{i+1,i+1}^{\dag}S_{i+1,i+1}
 \end{smallmatrix}
 \end{pmatrix}+
 r\begin{pmatrix}
 \begin{smallmatrix}
 R_{D_{ii}}\\R_{B_{i+1,i+1}}
 \end{smallmatrix}
 \end{pmatrix}
\end{align*}
\begin{align*}
\Leftrightarrow
 r\begin{pmatrix}
\begin{smallmatrix}
\widehat{E}_{i}&L_{A_{i+1,i+1}}&L_{M_{ii}}&A_{i+1,i+1}^{\dag}S_{i+1,i+1}\\
R_{D_{ii}}&0&0&0\\
R_{B_{i+1,i+1}}&0&0&0
\end{smallmatrix}
\end{pmatrix}=
r\begin{pmatrix}
\begin{smallmatrix}
L_{A_{i+1,i+1}}&L_{M_{ii}}&A_{i+1,i+1}^{\dag}S_{i+1,i+1}
\end{smallmatrix}
\end{pmatrix}+
r\begin{pmatrix}
\begin{smallmatrix}
R_{D_{ii}}\\R_{B_{i+1,i+1}}
 \end{smallmatrix}
 \end{pmatrix}
\end{align*}
\begin{align*}
\Leftrightarrow
 r\begin{pmatrix}
 \begin{smallmatrix}
 \widehat{E}_{i}&I&I&A_{i+1,i+1}^{\dag}S_{i+1,i+1}&0&0\\
 I&0&0&0&D_{ii}&0\\
 I&0&0&0&0&B_{i+1,i+1}\\
 0&0&M_{ii}&0&0&0\\
 0&A_{i+1,i+1}&0&0&0&0
 \end{smallmatrix}
 \end{pmatrix}=
 r\begin{pmatrix}
 \begin{smallmatrix}
 I&I&A_{i+1,i+1}^{\dag}S_{i+1,i+1}\\
 0&M_{ii}&0\\
 A_{i+1,i+1}&0&0
 \end{smallmatrix}
 \end{pmatrix}+r\begin{pmatrix}
 \begin{smallmatrix}
 I&D_{ii}&0\\
 I&0&B_{i+1,i+1}
 \end{smallmatrix}
 \end{pmatrix}
\end{align*}
\begin{align*}
\xLeftrightarrow{\mbox{By using}~ (\ref{april15equ347})}
 r\begin{pmatrix}
\begin{smallmatrix}
\widehat{E}_{i}&I&I&0&0&0\\
I&0&0&0&D_{ii}&0\\
I&0&0&0&0&B_{i+i,i+i}\\
0&0&M_{ii}&0&0&0\\
0&A_{i+i,i+i}&0&S_{i+i,i+i}&0&0
\end{smallmatrix}
\end{pmatrix}=
r\begin{pmatrix}
\begin{smallmatrix}
I&I&0\\0&M_{ii}&0\\A_{i+i,i+i}&0&S_{i+i,i+i}
\end{smallmatrix}
\end{pmatrix}+r\begin{pmatrix}
\begin{smallmatrix}
I&D_{ii}&0\\
I&0&B_{i+i,i+i}
 \end{smallmatrix}
 \end{pmatrix}
\end{align*}
\begin{align*}
\Leftrightarrow
 r\begin{pmatrix}
 \begin{smallmatrix}
 \widehat{E}_{i}&I&I&0&0&0\\
 I&0&0&0&D_{ii}&0\\
 I&0&0&0&0&B_{i+1,i+1}\\
 0&0&M_{ii}&0&0&0\\
 0&A_{i+1,i+1}&0&C_{i+1,i+1}&0&0
 \end{smallmatrix}
 \end{pmatrix}=
 r\begin{pmatrix}
 \begin{smallmatrix}
 I&I&0\\0&M_{ii}&0\\A_{i+1,i+1}&0&C_{i+1,i+1}
 \end{smallmatrix}
 \end{pmatrix}+r\begin{pmatrix}
 \begin{smallmatrix}
 I&D_{ii}&0\\
 I&0&B_{i+1,i+1}
 \end{smallmatrix}
 \end{pmatrix}
\end{align*}
\begin{align*}
\Leftrightarrow
 r\begin{pmatrix}
 \begin{smallmatrix}
 \widehat{E}_{i}&I&I&0&0&0&0\\
 I&0&0&0&D_{ii}&0&0\\
 I&0&0&0&0&B_{i+i,i+i}&0\\
 0&0&C_{ii}&0&0&0&A_{ii}\\
 0&A_{i+i,i+i}&0&C_{i+i,i+i}&0&0&0
 \end{smallmatrix}
 \end{pmatrix}=
 r\begin{pmatrix}
 \begin{smallmatrix}
 I&I&0&0\\0&C_{ii}&0&A_{ii}\\A_{i+i,i+i}&0&C_{i+i,i+i}&0
 \end{smallmatrix}
 \end{pmatrix}+r\begin{pmatrix}
 \begin{smallmatrix}
 I&D_{ii}&0\\
 I&0&B_{i+i,i+i}
 \end{smallmatrix}
 \end{pmatrix}
\end{align*}
\begin{align*}
\Leftrightarrow
 r\begin{pmatrix}
 \begin{smallmatrix}
 \widehat{E}_{i}&I&I&0&0&0&0&0&0\\
 I&0&0&0&G_{i}&0&0&0&0\\
 I&0&0&0&0&D_{i+1}&0&0&0\\
 0&0&F_{i}&0&0&0&C_{i}&A_{i}&0\\
 0&C_{i+1}&0&F_{i+1}&0&0&0&0&A_{i+1}\\
 0&0&0&0&B_{i}&0&0&0&0\\
 0&0&0&0&0&B_{i+1}&0&0&0
 \end{smallmatrix}
 \end{pmatrix}=
 r\begin{pmatrix}
 \begin{smallmatrix}
 I&I&0&0&0&0\\0&F_{i}&0&C_{i}&A_{i}&0\\C_{i+1}&0&F_{i+1}&0&0&A_{i+1}
 \end{smallmatrix}
 \end{pmatrix}
 +r\begin{pmatrix}
 \begin{smallmatrix}
 I&G_{i}&0\\
 I&0&D_{i+1}\\
 0&B_{i}&0\\
 0&0&B_{i+1}
 \end{smallmatrix}
 \end{pmatrix}
\end{align*}
\begin{align*}
\xLeftrightarrow{\mbox{By using}~ (\ref{april15equ344})}
 r\begin{pmatrix}
\begin{smallmatrix}
Z_{i+1}^{2}-Z_{i+1}^{1}&I&I&0&0&0&0&0&0\\
I&0&0&0&G_{i}&0&0&0&0\\
I&0&0&0&0&D_{i+1}&0&0&0\\
0&0&F_{i}&0&0&0&C_{i}&A_{i}&0\\
0&C_{i+1}&0&F_{i+1}&0&0&0&0&A_{i+1}\\
0&0&0&0&B_{i}&0&0&0&0\\
0&0&0&0&0&B_{i+1}&0&0&0
\end{smallmatrix}
\end{pmatrix}=
r\begin{pmatrix}
\begin{smallmatrix}
I&I&0&0&0&0\\0&F_{i}&0&C_{i}&A_{i}&0\\C_{i+1}&0&F_{i+1}&0&0&A_{i+1}
\end{smallmatrix}
\end{pmatrix}
+r\begin{pmatrix}
\begin{smallmatrix}
I&G_{i}&0\\
I&0&D_{i+1}\\
0&B_{i}&0\\
0&0&B_{i+1}
 \end{smallmatrix}
 \end{pmatrix}
\end{align*}
\begin{align*}
\Leftrightarrow r\begin{pmatrix}
\begin{smallmatrix}
0&0&B_{i}&0&0&0&0\\
A_{i}&C_{i}&E_{i}&F_{i}&0&0&0\\
0&0&G_{i}&0&D_{i+1}&0&0\\
0&0&0&C_{i+1}&-E_{i+1}&F_{i+1}&A_{i+1}\\
0&0&0&0&B_{i+1}&0&0
\end{smallmatrix}
\end{pmatrix}=r\begin{pmatrix}
\begin{smallmatrix}
A_{i}&C_{i}&F_{i}&0&0\\0&0&C_{i+1}&F_{i+1}&A_{i+1}
\end{smallmatrix}
\end{pmatrix}
+r\begin{pmatrix}
\begin{smallmatrix}
B_{i}&0\\G_{i}&D_{i+1}\\0&B_{i+1}
\end{smallmatrix}
\end{pmatrix}\Leftrightarrow(\ref{july25equ019}).
\end{align*}
Similarly, we have that
\begin{align*}
(\ref{july25equ005})\xLeftrightarrow{\mbox{By using}~ (\ref{april15equ344}),~(\ref{april15equ350}),~(\ref{april15equ352}) ~\mbox{and~elementary~ operations}} (\ref{july25equ020}),
\end{align*}
\begin{align*}
(\ref{july25equ006}) \xLeftrightarrow{\mbox{By using}~ (\ref{april15equ344}),~(\ref{april15equ352}) ~\mbox{and~elementary~ operations}}(\ref{july25equ021}),
\end{align*}
\begin{align*}
(\ref{july25equ007}) \xLeftrightarrow{\mbox{By using}~ (\ref{april15equ344}),~(\ref{april15equ347}),  ~(\ref{april15equ350})~\mbox{and~elementary~ operations}} (\ref{july25equ022}).
\end{align*}

Secondly,  we prove that   (\ref{july25equ004})--(\ref{july25equ007})$\Longleftrightarrow$  (\ref{july25equ019})--(\ref{july25equ022}) for the case $n-m>1$, respectively. 
Replace the notations in (\ref{july25equ015})--(\ref{july25equ017}) into (\ref{july25equ020}), we obtain:

\begin{align*}
&r\begin{pmatrix}
\begin{smallmatrix}
\widehat{C}_{m}&\widehat{E}_{m}&\widehat{F}_{m}& & & & &\widehat{A}_{m}& & & \\
&\widehat{G}_{m}& &\widehat{D}_{m+1}& & & & & & & \\
& &\widehat{C}_{m+1}&-\widehat{E}_{m+1}&\widehat{F}_{m+1}& & & &\widehat{A}_{m+1}& & \\
& & &\ddots&\ddots&\ddots & & & &\ddots& \\
& & & &\widehat{C}_{n}&(-1)^{n-m}\widehat{E}_{n}&\widehat{F}_{n}& & & &\widehat{A}_{n}\\
&\widehat{B}_{m}& & & & & & & & & \\
& & &\widehat{B}_{m+1}& & & & & & & \\
& & & &\ddots & & & & & & \\
& & & & &\widehat{B}_{n}& & & & & 
\end{smallmatrix}
\end{pmatrix}\nonumber\\
&=r\begin{pmatrix}
\begin{smallmatrix}
\widehat{C}_{m}&\widehat{F}_{m}& & &\widehat{A}_{m}& & & \\
&\widehat{C}_{m+1}&\widehat{F}_{m+1}& & &\widehat{A}_{m+1}& & \\
& &\ddots&\ddots & & &\ddots & \\
& & &\widehat{C}_n &\widehat{F}_n& & &\widehat{A}_n
\end{smallmatrix}
\end{pmatrix}
+r\begin{pmatrix}
\begin{smallmatrix}
\widehat{G}_{m}&\widehat{D}_{m+1}& & & \\
&\widehat{G}_{m+1}&\widehat{D}_{m+2}& & \\
& &\ddots&\ddots& \\
& & &\widehat{G}_{n-2}&\widehat{D}_{n}\\
\widehat{B}_{m}& & & & \\
&\widehat{B}_{m+1}& & & \\
& & &\ddots& \\
& & & &\widehat{B}_{n}
\end{smallmatrix}
\end{pmatrix},
\end{align*}
\begin{align*}
\xLeftrightarrow{\mbox{Replace the notations in (\ref{july25equ015})--(\ref{july25equ017})}}
\end{align*}

\begin{align*}
&r\begin{pmatrix}
\begin{smallmatrix}
L_{M_{m,m}}&\widehat{E}_m&A_{m+1,m+1}^{\dag}S_{m+1,m+1}& & & & &\begin{pmatrix}\begin{smallmatrix}L_{M_{mm}}L_{S_{mm}}&-L_{A_{m+1,m+1}}\end{smallmatrix}\end{pmatrix}& & & \\
&R_{N_{m+1,m+1}}D_{m+1,m+1}B_{m+1,m+1}^{\dag}& &\widehat{D}_{m+1}& & & & & & & \\
& &\widehat{C}_{m+1}&-\widehat{E}_{m+1}&\widehat{F}_{m+1}& & & &\widehat{A}_{m+1}& & \\
& & &\ddots&\ddots&\ddots & & & &\ddots& \\
&\begin{pmatrix}\begin{smallmatrix}R_{D_{mm}}\\-R_{B_{m+1,m+1}}\end{smallmatrix}\end{pmatrix}& & & & & & & & & \\
& & &\widehat{B}_{m+1}& & & & & & & \\
& & & &\ddots & & & & & & 
\end{smallmatrix}
\end{pmatrix}\nonumber\\
&=r\begin{pmatrix}
\begin{smallmatrix}
L_{M_{m,m}}&A_{m+1,m+1}^{\dag}S_{m+1,m+1}& & &\begin{pmatrix}\begin{smallmatrix}L_{M_{mm}}L_{S_{mm}}&-L_{A_{m+1,m+1}}\end{smallmatrix}\end{pmatrix}& & & \\
&\widehat{C}_{m+1}&\widehat{F}_{m+1}& & &\widehat{A}_{m+1}& & \\
& &\ddots&\ddots & & &\ddots & 
\end{smallmatrix}
\end{pmatrix}\\
&+r\begin{pmatrix}
\begin{smallmatrix}
R_{N_{m+1,m+1}}D_{m+1,m+1}B_{m+1,m+1}^{\dag}&\widehat{D}_{m+1}& & & \\
&\widehat{G}_{m+1}&\widehat{D}_{m+2}& & \\
& &\ddots&\ddots& \\
\begin{pmatrix}\begin{smallmatrix}R_{D_{mm}}\\-R_{B_{m+1,m+1}}\end{smallmatrix}\end{pmatrix}& & & & \\
&\widehat{B}_{m+1}& & & \\
& & &\ddots& 
\end{smallmatrix}
\end{pmatrix},
\end{align*}
\begin{align*}
&\Leftrightarrow r\begin{pmatrix}
\begin{smallmatrix}
L_{M_{m,m}}&\widehat{E}_m&A_{m+1,m+1}^{\dag}S_{m+1,m+1}& & & & &-L_{A_{m+1,m+1}}& & & \\
&R_{N_{m+1,m+1}}D_{m+1,m+1}B_{m+1,m+1}^{\dag}& &R_{N_{m+1,m+1}}& & & & & & & \\
& &\widehat{C}_{m+1}&-\widehat{E}_{m+1}&\widehat{F}_{m+1}& & & &\widehat{A}_{m+1}& & \\
& & &\ddots&\ddots&\ddots & & & &\ddots& \\
&\begin{pmatrix}\begin{smallmatrix}R_{D_{mm}}\\-R_{B_{m+1,m+1}}\end{smallmatrix}\end{pmatrix}& & & & & & & & & \\
& & &\widehat{B}_{m+1}& & & & & & & \\
& & & &\ddots & & & & & & 
\end{smallmatrix}
\end{pmatrix}\nonumber\\
&=r\begin{pmatrix}
\begin{smallmatrix}
L_{M_{m,m}}&A_{m+1,m+1}^{\dag}S_{m+1,m+1}& & &-L_{A_{m+1,m+1}}& & & \\
&\widehat{C}_{m+1}&\widehat{F}_{m+1}& & &\widehat{A}_{m+1}& & \\
& &\ddots&\ddots & & &\ddots & 
\end{smallmatrix}
\end{pmatrix}\\
&+r\begin{pmatrix}
\begin{smallmatrix}
R_{N_{m+1,m+1}}D_{m+1,m+1}B_{m+1,m+1}^{\dag}&R_{N_{m+1,m+1}}& & & \\
&\widehat{G}_{m+1}&\widehat{D}_{m+2}& & \\
& &\ddots&\ddots& \\
\begin{pmatrix}\begin{smallmatrix}R_{D_{mm}}\\-R_{B_{m+1,m+1}}\end{smallmatrix}\end{pmatrix}& & & & \\
&\widehat{B}_{m+1}& & & \\
& & &\ddots& 
\end{smallmatrix}
\end{pmatrix}
\end{align*}

\begin{align*}
\Leftrightarrow &r\begin{pmatrix}
\begin{smallmatrix}
 & &R_{D_{mm}}& & & & \\
& &R_{B_{m+1,m+1}}& & & & \\
L_{A_{m+1,m+1}}&L_{M_{mm}}&\widehat{E}_{m}&A_{m+1,m+1}^{\dag}S_{m+1,m+1}& & & \\
& &R_{N_{m+1,m+1}}D_{m+1,m+1}B_{m+1,m+1}^{\dag}& &R_{N_{m+1,m+1}}& & \\
& & &L_{M_{m+1,m+1}}&-\widehat{E}_{m+1}&A_{m+2,m+2}^{\dag}S_{m+2,m+2}&L_{A_{m+2,m+2}}\\
& & & &R_{D_{m+1,m+1}}& & \\
& & & &R_{B_{m+2,m+2}}& & \\
& & & &\ddots&\ddots &\ddots 
\end{smallmatrix}
\end{pmatrix}\\&=
r\begin{pmatrix}
\begin{smallmatrix}
L_{A_{m+1,m+1}}&L_{M_{mm}}&A_{m+1,m+1}^{\dag}S_{m+1,m+1}& & \\ & &L_{M_{m+1,m+1}}&A_{m+2,m+2}^{\dag}S_{m+2,m+2}&L_{A_{m+2,m+2}}\\
& & & &\ddots &\ddots 
\end{smallmatrix}
\end{pmatrix}\\
&+r\begin{pmatrix}
\begin{smallmatrix}
R_{D_{mm}}& \\ R_{B_{m+1,m+1}}& \\ R_{N_{m+1,m+1}}D_{m+1,m+1}B_{m+1,m+1}^{\dag}&R_{N_{m+1,m+1}}\\ &R_{D_{m+1,m+1}}\\  &R_{B_{m+2,m+2}}\\
&&\ddots &\ddots 
\end{smallmatrix}
\end{pmatrix}
\end{align*}

\begin{align*}
\Leftrightarrow &r\begin{pmatrix}
\begin{smallmatrix}
 &R_{D_{mm}}& & & && \\
 &R_{B_{m+1,m+1}}& & & && \\
L_{M_{mm}}&\widehat{E}_{m}&A_{m+1,m+1}^{\dag}S_{m+1,m+1}& & &I \\
 &R_{N_{m+1,m+1}}D_{m+1,m+1}B_{m+1,m+1}^{\dag}& &R_{N_{m+1,m+1}}& && \\
 & &L_{M_{m+1,m+1}}&-\widehat{E}_{m+1}&A_{m+2,m+2}^{\dag}S_{m+2,m+2}&&I\\
 & & &R_{D_{m+1,m+1}}& \ddots &&&\ddots  \\
 & & &R_{B_{m+2,m+2}}&\ddots  & \\
 & & & &\ddots & A_{m+1,m+1}& \\
 & & & & &&A_{m+2,m+2}\\
  & & & & &&&\ddots 
\end{smallmatrix}
\end{pmatrix}\\&=
r\begin{pmatrix}
\begin{smallmatrix}
&L_{M_{mm}}&A_{m+1,m+1}^{\dag}S_{m+1,m+1}&& I&& \\
 & &L_{M_{m+1,m+1}}&A_{m+2,m+2}^{\dag}S_{m+2,m+2}&&I\\
& & \ddots &\ddots &  &&\ddots  \\ & & & &&\\
& &  && A_{m+1,m+1} & \\ & & & &&A_{m+2,m+2}\\
& & & &&&\ddots 
\end{smallmatrix}
\end{pmatrix}\\
&+r\begin{pmatrix}\begin{smallmatrix}R_{D_{mm}}& \\ R_{B_{m+1,m+1}}& \\ R_{N_{m+1,m+1}}D_{m+1,m+1}B_{m+1,m+1}^{\dag}&R_{N_{m+1,m+1}}\\ &R_{D_{m+1,m+1}}\\  &R_{B_{m+2,m+2}}\\
&&\ddots&\ddots 
\end{smallmatrix}\end{pmatrix}
\end{align*}

\begin{align*}
\xLeftrightarrow{\mbox{By}~ (\ref{april15equ347})} &r\begin{pmatrix}
\begin{smallmatrix}
& &R_{D_{ mm}}& & & & \\
& &R_{B_{ m+1,m+1}}& & & & \\
&L_{M_{ mm}}&\widehat{E}_{1}& & && I& \\
& &R_{N_{ m+1,m+1}}D_{ m+1,m+1}B_{ m+1,m+1}^{\dag}& &R_{N_{ m+1,m+1}}& & \\
& & &L_{M_{ m+1,m+1}}&-\widehat{E}_{2}& &&I\\
& & & &R_{D_{ m+1,m+1}}& &&&\ddots  \\
& & & &R_{B_{ m+2,m+2}}& & \\
& & &S_{ m+1,m+1}& && A_{ m+1,m+1}& \\
& & & & \ddots&S_{ m+2,m+2}&&A_{ m+2,m+2}\\
& & & & &&\ddots &\ddots&\ddots  \\
\end{smallmatrix}
\end{pmatrix}\\&=
r\begin{pmatrix}
\begin{smallmatrix}
&L_{M_{ mm}}& & &I& \\ 
& &L_{M_{ m+1,m+1}}& &&I\\
& &S_{ m+1,m+1}&& A_{ m+1,m+1}&&\ddots  \\ 
& & &S_{ m+2,m+2}&&A_{ m+2,m+2}\\
&&&&\ddots &\ddots &\ddots 
\end{smallmatrix}
\end{pmatrix}\\
&+r\begin{pmatrix}
\begin{smallmatrix}
R_{D_{ mm}}& \\ R_{B_{ m+1,m+1}}& \\ R_{N_{ m+1,m+1}}D_{ m+1,m+1}B_{ m+1,m+1}^{\dag}&R_{N_{ m+1,m+1}}\\ &R_{D_{ m+1,m+1}}\\  &R_{B_{ m+2,m+2}}\\
&\ddots &\ddots 
\end{smallmatrix}
\end{pmatrix}
\end{align*}
\begin{align*}
\Leftrightarrow &r\begin{pmatrix}
\begin{smallmatrix}
& &R_{D_{mm}}& & & & & \\
&L_{M_{mm}}&\widehat{E}_{m}& & && I& & \\
& &R_{N_{m+1,m+1}}D_{m+1,m+1}B_{m+1,m+1}^{\dag}& &R_{N_{m+1,m+1}}& & & \\
& & &I&-\widehat{E}_{m+1}& &&I& \\
& & & &R_{D_{m+1,m+1}}& & &&\ddots  \\
& & & &R_{B_{m+2,m+2}}& & & \\
& & &C_{m+1,m+1}& & &A_{m+1,m+1}& & \\
& & & & &C_{m+2,m+2}&\ddots &A_{m+2,m+2}& \\
& &I& & & & \ddots &\ddots &B_{m+1,m+1}\\
\end{smallmatrix}
\end{pmatrix}\\&=
r\begin{pmatrix}
\begin{smallmatrix}
&L_{M_{mm}}& && I& \\
& &I& &&&I\\
& &C_{m+1,m+1}& &A_{m+1,m+1}&& &\ddots \\ 
& & &C_{m+2,m+2}&\ddots& &A_{m+2,m+2}\\
&&&&\ddots &&&\ddots 
\end{smallmatrix}
\end{pmatrix}\\
&+r\begin{pmatrix}
\begin{smallmatrix}
R_{D_{mm}}& & \\
 R_{N_{m+1,m+1}}D_{m+1,m+1}B_{m+1,m+1}^{\dag}&R_{N_{m+1,m+1}}& \\ &R_{D_{m+1,m+1}}& \\  &R_{B_{m+2,m+2}}& \\
  I& \ddots& \ddots &B_{m+1,m+1}\\ 
  & & & &\ddots 
\end{smallmatrix}
\end{pmatrix}
\end{align*}

\begin{align*}
\xLeftrightarrow{\mbox{By}~ (\ref{april15equ352})} &
r\begin{pmatrix}
\begin{smallmatrix}
& &R_{D_{mm}}& & & & & \\
&L_{M_{mm}}&\widehat{E}_{m}& & &&I & & \\
& & & &R_{N_{m+1,m+1}}& && &R_{N_{m+1,m+1}}D_{m+1,m+1}\\
& & &I&-\widehat{E}_{m+1}& &&I& \\
& & & &R_{D_{m+1,m+1}}& & & \\
& & & &R_{B_{m+2,m+2}}& & & \\
& & &C_{m+1,m+1}& && A_{m+1,m+1}& &\ddots  \\
& & & & &C_{m+2,m+2}&\ddots &A_{m+2,m+2}& \\
& &I& & && \ddots &\ddots  &B_{m+1,m+1}\\
\end{smallmatrix}
\end{pmatrix}=\\&
r\begin{pmatrix}
\begin{smallmatrix}
&L_{M_{mm}}& && I& \\ 
& &I& &&&I\\
& &C_{m+1,m+1}&& A_{m+1,m+1}&&&\ddots  \\
 & & &C_{m+2,m+2}&&&A_{m+2,m+2}\\
 &&&&\ddots &\ddots &&\ddots 
\end{smallmatrix}
\end{pmatrix}\\
&+r\begin{pmatrix}
\begin{smallmatrix}
R_{D_{mm}}& & \\ 
 &R_{N_{m+1,m+1}}&R_{N_{m+1,m+1}}D_{m+1,m+1}\\ 
 &R_{D_{m+1,m+1}}& \\ 
  &R_{B_{m+2,m+2}}& \\
  I& \ddots&B_{m+1,m+1}\\ 
  & & & &\ddots 
\end{smallmatrix}
\end{pmatrix}
\end{align*}

\begin{align*}
\Leftrightarrow &r\begin{pmatrix}
\begin{smallmatrix}
&  &R_{D_{mm}}&  &  &  &  &  \\
&L_{M_{mm}}&\widehat{E}_{m}&  &  && I &  &  \\
&  &  &  &I&  &  &&&D_{m+1,m+1}\\
&  &  &I&-\widehat{E}_{m+1}&  &&I&  \\
&  &  &  &R_{B_{m+2,m+2}}&  &  &  \\
&  &  &C_{m+1,m+1}&  &&  A_{m+1,m+1}&  &  \\
&  &  &  & \ddots &C_{m+2,m+2}&&A_{m+2,m+2}&  \\
&  &&  &  &  \ddots &\ddots   &\ddots & &\\
&  &I&  &  &   &   && &B_{m+1,m+1}\\
\end{smallmatrix}
\end{pmatrix}\\&=
r\begin{pmatrix}
\begin{smallmatrix}
&L_{M_{mm}}&  && I &  \\  
&  &I&  &&I\\
&  &C_{m+1,m+1}&  &A_{m+1,m+1}& &\ddots  \\
  &  &  &C_{m+2,m+2}&&A_{m+2,m+2}\\
  &&&&\ddots &&\ddots 
\end{smallmatrix}
\end{pmatrix}\\
&+r\begin{pmatrix}
\begin{smallmatrix}
R_{D_{mm}}&  &  \\ 
&I&&&D_{m+1,m+1}\\ 
&R_{B_{m+2,m+2}}&  \\
&  \ddots & &&\\ 
I&   & &&B_{m+1,m+1}\\ 
& & & &\ddots 
\end{smallmatrix}
\end{pmatrix}
\end{align*}
\begin{align*}
\Leftrightarrow &r\begin{pmatrix}
\begin{smallmatrix}
&  &I&  &  &  &  &  &D_{mm}&  &  \\
&I&\widehat{E}_{m}&  &  &  &  &  &  &  &&I  \\
&  &  &  &I&  &  &D_{m+1,m+1}&  &  &  \\
&  &  &I&-\widehat{E}_{m+1}&  &&  &  &  & &&I \\
&  &  &  &I&  &  &  &  &B_{m+2,m+2}&  \\
&  &  &C_{m+1,m+1}&  &  &  &  &  &  && A_{m+1,m+1}&&&\ddots  \\
&  &  &  &  &C_{m+2,m+2}&&  &  &  &&&A_{m+2,m+2}  \\
&C_{mm}&  &  &  &  &  & \ddots  &  &  &A_{mm}&&&\ddots \\
&&&&&&&&&\ddots &&&\ddots \\
&  &I&  &  &  &  &B_{m+1,m+1}&  &  &  \\
\end{smallmatrix}
\end{pmatrix}=\\&
r\begin{pmatrix}
\begin{smallmatrix}
&I&  &  &  &  I\\
&  &I&  & & & I  \\
&  &C_{m+1,m+1}&  &  &A_{m+1,m+1}  \\
&  &  &C_{m+2,m+2}&&&A_{m+2,m+2}&  \\
&C_{mm}&  &  &  &A_{mm}&\ddots \\
& & & & \ddots &\ddots 
\end{smallmatrix}
\end{pmatrix}
+r\begin{pmatrix}
\begin{smallmatrix}
I&  &  &D_{mm}&  \\
&I&D_{m+1,m+1}&  &  \\
& &\ddots & \ddots &\\
I&  &B_{m+1,m+1}&  &  \\
&I&  &  B_{m+2,m+2}\\
& & & &\ddots 
\end{smallmatrix}
\end{pmatrix}
\end{align*}                      
\begin{align*}
\xLeftrightarrow{\mbox{By}~ (\ref{april15equ344})}&r\begin{pmatrix}
\begin{smallmatrix}
& &I& & & & & &G_{m}& & & & & \\
& &I& & & & &D_{m+1}& & & & & & \\
I&I&Z_{m+1}^{2}-Z_{m+1}^{1}& & & & & & & & & & & \\
& & & &I& & &G_{m+1}& & & & & & \\
& & &I&-Z_{m+2}^{2}+Z_{m+2}^{1}& &I& & & & & & & \\
& & & &I& & & & &D_{m+2}& & & & \\
C_{m+1}& & &F_{m+1}& & & & & & & &A_{m+1}& & \\
& & & & &F_{m+2}&C_{m+2}& & & & & &A_{m+2}& \\
&F_{m}& & & & & & & & &C_{m}& & &A_{m}\\
& & & & & & &B_{m+1}& & & & & &&&&\ddots  \\
& & & & & & & &B_{m}& & & & & \\
& & & & & & & & &B_{m+2}& & & & \\
& & & & & & & & &\ddots && \ddots & & & 
\end{smallmatrix}
\end{pmatrix}\\&=
r\begin{pmatrix}
\begin{smallmatrix}
I&I& & & & & & & \\ & &I& &I& & & & \\C_{m+1}& &F_{m+1}& & & &A_{m+1}& & \\ & & &F_{m+2}&C_{m+2}& & &A_{m+2}& \\ &F_{mm}& & & &C_{m}& & &A_{m}\\
& & & & &\ddots  &\ddots  & & & & \ddots & & 
\end{smallmatrix}
\end{pmatrix}
+r\begin{pmatrix}
\begin{smallmatrix}
I& & &G_{m}& \\ I& &D_{m+1}& & \\  &I&G_{m+1}& & \\  &I& & &D_{m+2}\\ & &B_{m+1}& & \\ & & &B_{m}& \\ & & & &B_{m+2}\\&&&\ddots&&\ddots 
\end{smallmatrix}
\end{pmatrix}
\end{align*}

\begin{align*}
&\Leftrightarrow r\begin{pmatrix}
\begin{smallmatrix}
& &I& & & & & &G_{ m}& & & & & \\
& &I& & & & &D_{m+1}& & & & & & \\
I&I& & & & & & & & & & & & \\
& & & &I& & &G_{m+1}& & & & & & \\
& & &I& & &I& & & & & & & \\
& & & &I& & & & &D_{m+2}& & & & \\
C_{m+1}& & &F_{m+1}& & & &E_{m+1}& & & &A_{m+1}& & \\
& & & & &F_{m+2}&C_{m+2}& & &-E_{m+2}& & &A_{m+2}& \\
&F_{ m}& & & & & & &-E_{ m}& &C_{ m}& & &A_{ m}\\
& & & & & & &B_{m+1}& & & & & &&\ddots  \\
& & & & & & & &B_{ m}& & & & & \\
& & & & & & & & &B_{m+2}& & & & \\
& & & & & & & & &\ddots && \ddots & & & 
\end{smallmatrix}
\end{pmatrix}=\\&
r\begin{pmatrix}
\begin{smallmatrix}
I&I& & & & & & & \\ & &I& &I& & & & \\C_{m+1}& &F_{m+1}& & & &A_{m+1}& & \\ & & &F_{m+2}&C_{m+2}& & &A_{m+2}& \\ &F_{ m m}& & & &C_{ m}& & &A_{ m}\\
& & & & &\ddots  &\ddots  & & & & \ddots & & 
\end{smallmatrix}
\end{pmatrix}
+r\begin{pmatrix}
\begin{smallmatrix}
I& & &G_{ m}& \\ I& &D_{m+1}& & \\  &I&G_{m+1}& & \\  &I& & &D_{m+2}\\ & &B_{m+1}& & \\ & & &B_{ m}& \\ & & & &B_{m+2}\\
&&&\ddots&&\ddots 
\end{smallmatrix}
\end{pmatrix}
\end{align*}
\begin{align*}
&\Leftrightarrow r\begin{pmatrix}
\begin{smallmatrix}
C_{m}&E_{m}&F_{m}& & & & &A_{m}& & & \\
&G_{m}& &D_{m+1}& & & & & & & \\
& &C_{m+1}&-E_{m+1}&F_{m+1}& & & &A_{m+1}& & \\
& & &\ddots&\ddots&\ddots & & & &\ddots& \\
&B_{m}& & & & & & & & & \\
& & &B_{m+1}& & & & & & & \\
& & & &\ddots & & & & & & \\
\end{smallmatrix}
\end{pmatrix}\nonumber\\
&=r\begin{pmatrix}
\begin{smallmatrix}
C_{m}&F_{m}& & &A_{m}& & & \\
&C_{m+1}&F_{m+1}& & &A_{m+1}& & \\
& &\ddots&\ddots & & &\ddots & \\
\end{smallmatrix}
\end{pmatrix}
+r\begin{pmatrix}
\begin{smallmatrix}
G_{m}&D_{m+1}& & & \\
&G_{m+1}&D_{m+2}& & \\
& &\ddots&\ddots& \\
& & &G_{n-2}&D_{n}\\
B_{m}& & & & \\
&B_{m+1}& & & \\
& & &\ddots& \\
\end{smallmatrix}
\end{pmatrix}.
\end{align*}
Following this way, we obtain that
\begin{align*}
&r\begin{pmatrix}
\begin{smallmatrix}
C_{m}&E_{m}&F_{m}& & & & &A_{m}& & & \\
&G_{m}& &D_{m+1}& & & & & & & \\
& &C_{m+1}&-E_{m+1}&F_{m+1}& & & &A_{m+1}& & \\
& & &\ddots&\ddots&\ddots & & & &\ddots& \\
& & & &C_{n}&(-1)^{n-m}E_{n}&F_{n}& & & &A_{n}\\
&B_{m}& & & & & & & & & \\
& & &B_{m+1}& & & & & & & \\
& & & &\ddots & & & & & & \\
& & & & &B_{n}& & & & & 
\end{smallmatrix}
\end{pmatrix}\nonumber\\
&=r\begin{pmatrix}
\begin{smallmatrix}
C_{m}&F_{m}& & &A_{m}& & & \\
&C_{m+1}&F_{m+1}& & &A_{m+1}& & \\
& &\ddots&\ddots & & &\ddots & \\
& & &C_n &F_n& & &A_n
\end{smallmatrix}
\end{pmatrix}
+r\begin{pmatrix}
\begin{smallmatrix}
G_{m}&D_{m+1}& & & \\
&G_{m+1}&D_{m+2}& & \\
& &\ddots&\ddots& \\
& & &G_{n-2}&D_{n}\\
B_{m}& & & & \\
&B_{m+1}& & & \\
& & &\ddots& \\
& & & &B_{n}
\end{smallmatrix}
\end{pmatrix}.
\end{align*}
Similarly, it can be found that :
\begin{align*}
(\ref{july25equ004}) \Leftrightarrow (\ref{july25equ020}),
~
(\ref{july25equ005}) \Leftrightarrow (\ref{july25equ021}),
~
(\ref{july25equ006}) \Leftrightarrow (\ref{july25equ022}),
\end{align*}
for case $n-m>1$.

\end{proof}

\section{\textbf{Some systems of quaternion matrix equations involving $\phi$-Hermicity}}
In this section, we will consider the system of quaternion matrix equations involving $\phi$-Hermicity
\begin{align}\label{june16equ051}
  \left\{\begin{array}{c}
A_{1}X_{1}+(A_{1}X_{1})_{\phi}+C_{1}Z_{1}(C_{1})_{\phi}+F_{1}Z_{2}(F_{1})_{\phi}=E_{1},\\
A_{2}X_{2}+(A_{2}X_{2})_{\phi}+C_{2}Z_{2}(C_{2})_{\phi}+F_{2}Z_{3}(F_{3})_{\phi}=E_{2},\\
\vdots \\
A_{k}X_{k}+(A_{k}X_{k})_{\phi}+C_{k}Z_{k}(C_{k})_{\phi}+F_{k}Z_{k+1}(F_{k})_{\phi}=E_{k},\\
Z_{1}=(Z_{1})_{\phi},~Z_{2}=(Z_{2})_{\phi},\cdots,Z_{k}=(Z_{k})_{\phi},~Z_{k+1}=(Z_{k+1})_{\phi},
\end{array}
  \right.
\end{align}
where $A_{i}, C_{i}, F_{i}, E_{i}$ are given, and $E_{i}$ are $\phi$-Hermitian matrices $(i=\overline{1,k})$. We derive some solvability conditions to the system (\ref{june16equ051}) in terms of ranks involved. We first give the definition of nonstandard involution and $\phi$-Hermitian.

A map $\phi$: $\mathbb{H}\longrightarrow \mathbb{H}$ is called an involution if $\phi(xy)=\phi(y)\phi(x)$,
$\phi(x+y)=\phi(x)+\phi(y)$ and $\phi(\phi(x))=x$ for all $x,y\in \mathbb{H}.$ Moreover, the matrix
representing of $\phi$, with respect to the basis $\{1,\mathbf{i},\mathbf{j},\mathbf{k}\}$ is
\begin{align}
\begin{pmatrix}1&0\\0&T\end{pmatrix},
\end{align}
where either $T=-I_{3}$ or $T$ is a $3\times 3$ real orthogonal symmetric matrix with eigenvalues $1,1,-1$ (see Theorem 2.4.4 in \cite{rodman}). If $T=-I_{3},$ then $\phi$ is the standard involution, and for the latter case, $\phi$ is called a nonstandard involution (see Definition 2.4.5 in \cite{rodman}). We in this paper consider only the nonstandard involution.

For $A\in\mathbb{H}^{m\times n},$ we denote by $A_{\phi}$ the $n\times m$ matrix obtained by applying $\phi$ entrywise to $A^{T},$ where $\phi$ is a nonstandard involution (\cite{rodman}, page 44). Now we recall the definition of the $\phi$-Hermitian matrix.

\begin{definition}[$\phi$-Hermitian Matrix]\label{def04}
\cite{rodman} $A\in\mathbb{H}^{n\times n}$ is said to be $\phi$-Hermitian if $A=A_{\phi}$, where $\phi$ is a nonstandard involution.
\end{definition}

Some properties of $A_{\phi}$, rank and Moore-Penrose inverse of $A_{\phi}$ are given as follows.

\begin{property}\cite{rodman}
Let $\phi$ be a nonstandard involution. Then,\\
$
(1) ~(\alpha A+\beta B)_{\phi}=A_{\phi}\phi(\alpha)+B_{\phi}\phi(\beta),~\alpha,\beta\in \mathbb{H},~A,B\in\mathbb{H}^{m\times n}.
$\\
$
(2) ~( A\alpha+ B\beta)_{\phi}=\phi(\alpha)A_{\phi}+\phi(\beta)B_{\phi},~\alpha,\beta\in \mathbb{H},~A,B\in\mathbb{H}^{m\times n}.
$\\
$
(3) ~(AB)_{\phi}=B_{\phi}A_{\phi},~A\in\mathbb{H}^{m\times n},~B\in\mathbb{H}^{n\times p}.
$\\
$
(4) ~(A_{\phi})_{\phi}=A,~A\in\mathbb{H}^{m\times n}.
$\\
$(5)$ If $A\in\mathbb{H}^{n\times n}$ is invertible, then $(A_{\phi})^{-1}=(A^{-1})_{\phi}.$
\\
$(6)~r(A)=r(A_{\phi}),A\in\mathbb{H}^{m\times n}.$\\
$(7) ~I_{\phi}=I,~0_{\phi}=0.$\\
$(8)~  (A_{\phi})^{\dag}=(A^{\dag})_{\phi},$ \cite{heela}.  \\
$(9)~   (L_{A})_{\phi}=R_{A_{\phi}},~ (R_{A})_{\phi}=L_{A_{\phi}},$ \cite{heela}.
\end{property}

For more properties of $\phi$-Hermitian quaternion matrix, we refer the reader to the recent book \cite{rodman} and the papers \cite{heaaca} and \cite{heela}. The following theorem gives some solvability conditions to the system (\ref{june16equ051}) in terms of ranks.

\begin{theorem}\label{theorem41}
The system (\ref{june16equ051})  is consistent if and only if
\begin{align}\label{july27equ001}
r\begin{pmatrix}E_{i}&A_{i}&C_{i}&F_{i}\\(A_{i})_{\phi}&0&0&0\end{pmatrix}=r\begin{pmatrix}A_{i}&C_{i}&F_{i}\end{pmatrix}+r(A_{i}),
\end{align}
\begin{align}\label{july27equ002}
r\begin{pmatrix}E_{i}&A_{i}&C_{i}\\(A_i)_{\phi}&0&0\\(F_{i})_{\phi}&0&0\end{pmatrix}=r\begin{pmatrix}A_{i}&C_{i}\end{pmatrix}+r\begin{pmatrix}A_{i}\\F_{i}\end{pmatrix},
\end{align}
\begin{align}\label{july27equ003}
&r\begin{pmatrix}
\begin{smallmatrix}
C_{m}&E_{m}&F_{m}& & & & &A_{m}& & & \\
&(F_{m})_{\phi}& &(C_{m+1})_{\phi}& & & & & & & \\
& &C_{m+1}&-E_{m+1}&(F_{m+1})_{\phi}& & & &A_{m+1}& & \\
& & &\ddots&\ddots&\ddots & & & &\ddots& \\
& & & &C_{n}&(-1)^{n-m}E_{n}&(F_{n})_{\phi}& & & &A_{n}\\
&(A_{m})_{\phi}& & & & & & & & & \\
& & &(A_{m})_{\phi}& & & & & & & \\
& & & &\ddots & & & & & & \\
& & & & &(A_{n})_{\phi}& & & & & 
\end{smallmatrix}
\end{pmatrix}\nonumber\\
&=r\begin{pmatrix}
\begin{smallmatrix}
C_{m}&F_{m}& & &A_{m}& & & \\
&C_{m+1}&F_{m+1}& & &A_{m+1}& & \\
& &\ddots&\ddots & & &\ddots & \\
& & &C_n &F_n& & &A_n
\end{smallmatrix}
\end{pmatrix}
+r\begin{pmatrix}
\begin{smallmatrix}
F_{m}&C_{m+1}& & & \\
&F_{m+1}&C_{m+2}& & \\
& &\ddots&\ddots& \\
& & &F_{n-1}&C_{n}\\
A_{m}& & & & \\
&A_{m+1}& & & \\
& & &\ddots& \\
& & & &A_{n}
\end{smallmatrix}
\end{pmatrix},
\end{align}

\begin{align}\label{july27equ004}
&r\begin{pmatrix}
\begin{smallmatrix}
(C_{m})_{\phi}& & & & & & & & & \\
E_{m}&F_{m}& & & & &A_m& & & \\
(-F_{m})_{\phi}& &(C_{m+1})_{\phi}& & & & & & & \\
&C_{m+1}&-E_{m+1}&F_{m+1}& & &&A_{m+1}&&\\
& &(F_{m+1})_{\phi}& &\ddots && & &\ddots &\\
& & &C_{m+2}&\ddots &(C_n)_{\phi}& & &&A_n\\
& & & &\ddots &(-1)^{n-m}E_n& & &&\\
& & & & &(F_n)_{\phi}& & &&\\
(A_{m})_{\phi}& & & & & & & &&\\
& &(A_{m+1})_{\phi }& & & & & &&\\
& & &  &\ddots & & & &&\\
& & &  & &(A_n)_{\phi }& & &&
\end{smallmatrix}
\end{pmatrix}\nonumber\\
&=r\begin{pmatrix}
\begin{smallmatrix}
F_{m}& & & &A_m&&\\
C_{m+1}&F_{m+1}& & & &A_{m+1}&\\
&C_{m+2}&\ddots&F_{n-1}& &&\ddots \\
& &\ddots&C_n& &&&A_n
\end{smallmatrix}
\end{pmatrix}
+r\begin{pmatrix}
\begin{smallmatrix}
C_{m}& & \\
F_{m}&C_{m+1}& \\
&F_{m+1}&\ddots&\\
& &\ddots &C_n\\
& & &F_n\\
A_{m}& & \\
&A_{m+1}& \\
& &\ddots\\
& & &A_n
\end{smallmatrix}
\end{pmatrix},
\end{align}
\end{theorem}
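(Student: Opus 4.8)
The plan is to realize the $\phi$-Hermitian system (\ref{june16equ051}) as a structured special case of the general system (\ref{mainsystem01}) and then to transport the rank criterion of \thmref{theorem31} through the involution $\phi$. First I would make the identification $B_i=(A_i)_\phi$, $D_i=(C_i)_\phi$, $G_i=(F_i)_\phi$, together with $Y_i=(X_i)_\phi$ and the requirement that each $Z_i$ be $\phi$-Hermitian. Under these substitutions, Property (3) gives $Y_i B_i=(X_i)_\phi(A_i)_\phi=(A_iX_i)_\phi$, so the $i$th equation of (\ref{mainsystem01}) becomes exactly the $i$th equation of (\ref{june16equ051}). Thus (\ref{june16equ051}) is precisely the demand for a solution of (\ref{mainsystem01}), with these choices of $B_i,D_i,G_i$, in which the structural constraints $Y_i=(X_i)_\phi$ and $Z_i=(Z_i)_\phi$ additionally hold.

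The crucial reduction is a symmetrization step: I claim that (\ref{june16equ051}) is consistent if and only if the plain system (\ref{mainsystem01}), with $B_i,D_i,G_i$ as above and no structural constraint on the unknowns, is consistent. One direction is immediate, since a solution of (\ref{june16equ051}) is in particular a solution of the plain system. For the converse, let $(X_i,Y_i,Z_i,Z_{i+1})$ solve the plain system. Applying $\phi$ to each equation and using $(PQ)_\phi=Q_\phi P_\phi$, $(M_\phi)_\phi=M$ and $E_i=(E_i)_\phi$, the $i$th relation becomes $A_i(Y_i)_\phi+(X_i)_\phi(A_i)_\phi+C_i(Z_i)_\phi(C_i)_\phi+F_i(Z_{i+1})_\phi(F_i)_\phi=E_i$, which shows that $\bigl((Y_i)_\phi,(X_i)_\phi,(Z_i)_\phi,(Z_{i+1})_\phi\bigr)$ is again a solution of the \emph{same} plain system. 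Since the equations are affine and $\tfrac12$ is a central real scalar fixed by $\phi$, the average $\widetilde X_i=\tfrac12\bigl(X_i+(Y_i)_\phi\bigr)$, $\widetilde Y_i=\tfrac12\bigl(Y_i+(X_i)_\phi\bigr)$, $\widetilde Z_i=\tfrac12\bigl(Z_i+(Z_i)_\phi\bigr)$ is still a solution, and by construction $\widetilde Y_i=(\widetilde X_i)_\phi$ and $(\widetilde Z_i)_\phi=\widetilde Z_i$. Hence $(\widetilde X_i,\widetilde Z_i)$ solves (\ref{june16equ051}).

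With this reduction established, I would apply \thmref{theorem31} to the plain system determined by $B_i=(A_i)_\phi$, $D_i=(C_i)_\phi$, $G_i=(F_i)_\phi$, producing the rank identities (\ref{july25equ002})--(\ref{july25equ007}) with these coefficients inserted. The final step is to collapse them into (\ref{july27equ001})--(\ref{july27equ004}) using the involution. Here the workhorse is Property (6), $r(M)=r(M_\phi)$, combined with the block rule $\begin{pmatrix}P&Q\end{pmatrix}_\phi=\begin{pmatrix}P_\phi\\Q_\phi\end{pmatrix}$ and $E_i=(E_i)_\phi$. Applying $\phi$ to each block matrix shows that the two identities in (\ref{july25equ002}) coincide, collapsing to the single identity (\ref{july27equ001}); likewise the two identities in (\ref{july25equ003}) both collapse to (\ref{july27equ002}). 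In the same way the larger banded families (\ref{july25equ004})/(\ref{july25equ005}) and (\ref{july25equ006})/(\ref{july25equ007}) are carried into one another by $\phi$, so from each pair only one representative survives, yielding (\ref{july27equ003}), (\ref{july27equ004}) and their successors.

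I anticipate the main obstacle to be twofold. Conceptually, in the symmetrization I must verify that the averaged tuple is a solution of the \emph{entire} coupled family and not merely of each equation in isolation: the chaining unknown $Z_{i+1}$ is constrained simultaneously by equations $i$ and $i+1$, so I would check that applying $\phi$ to the whole system induces one global involution on the solution tuple under which the system is invariant, ensuring the symmetrization of $Z_{i+1}$ is the same when viewed from either equation. Computationally, the delicate bookkeeping lies in the last step: one has to track the block-transpose action of $\phi$ through the large banded matrices of (\ref{july25equ004})--(\ref{july25equ007}), confirming that the alternating signs $(-1)^{n-m}$, the off-diagonal placement of the $E$-blocks, and the anti-diagonal reversal produced by $\phi$ all match up so that each matrix is carried exactly onto its partner. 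This index-and-sign matching is where the bulk of the routine but error-prone verification resides.
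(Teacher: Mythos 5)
Your proposal follows essentially the same route as the paper: reduce (\ref{june16equ051}) to the unstructured system obtained by setting $B_i=(A_i)_\phi$, $D_i=(C_i)_\phi$, $G_i=(F_i)_\phi$, prove equivalence of solvability by applying $\phi$ to a solution and averaging (the paper's formula $X_i=\tfrac12\bigl(\widehat X_i+(\widehat Y_i)_\phi\bigr)$, etc.), and then invoke Theorem \ref{theorem31} and collapse the paired rank conditions via $r(M)=r(M_\phi)$. In fact your writeup supplies the justifications (why the $\phi$-image of a solution is again a solution, why the averaging respects the chaining of $Z_{i+1}$, and why the rank identities of Theorem \ref{theorem31} coincide in pairs under $\phi$) that the paper's two-line proof leaves implicit.
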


\begin{proof}
We prove that the system (\ref{june16equ051}) has a solution if and only if the system of quaternion matrix equations
\begin{align}\label{june16equ510}
  \left\{\begin{array}{c}
A_{1}\widehat{X}_{1}+\widehat{Y}_{1}(A_{1})_{\phi}+C_{1}\widehat{Z}_{1}(C_{1})_{\phi}+F_{1}\widehat{Z}_{2}(F_{1})_{\phi}=E_{1},\\
A_{2}\widehat{X}_{2}+\widehat{Y}_{2}(A_{2})_{\phi}+C_{2}\widehat{Z}_{2}(C_{2})_{\phi}+F_{2}\widehat{Z}_{3}(F_{2})_{\phi}=E_{2},\\
\vdots\\
A_{k}\widehat{X}_{k}+\widehat{Y}_{k}(A_{k})_{\phi}+C_{k}\widehat{Z}_{k}(C_{k})_{\phi}+F_{k}\widehat{Z}_{k+1}(F_{k})_{\phi}=E_{k},
\end{array}
  \right.
\end{align}
has a solution. If the system (\ref{june16equ051}) has a  solution, it can be expressed as
\begin{align*}
X_i=\dfrac{\widehat{X}_{i}+(\widehat{Y}_{i})_{\phi}}{2},
Z_i=\dfrac{\widehat{Z}_{i}+(\widehat{Z}_{i})_{\phi}}{2},
Z_{i+1}=\dfrac{\widehat{Z}_{i+1}+(\widehat{Z}_{i+1})_{\phi}}{2},~(1\le i\le k).
\end{align*}
We can give the
solvability conditions to the system  (\ref{mainsystem02}) by
Theorem \ref{theorem31}.
\end{proof}

\section{\textbf{Conclusions}}

We have provided some necessary and sufficient conditions for the existence of a solution to the system of quaternion matrix equations (\ref{mainsystem01}) in terms of ranks. Based on the results of the system (\ref{mainsystem01}), we have presented some necessary and sufficient conditions for the existence of a solution to the system of quaternion matrix equations involving $\phi$-Hermicity (\ref{mainsystem02}). Some known results can be viewed as special cases of the ones obtained in this paper.

%


\begin{thebibliography}{99}

%
%
%
%
%
%
%

\bibitem {Dmytryshyn}A. Dmytryshyn, B. K\aa gstr\"{o}m, Coupled Sylvester-type matrix equations and block diagonalization, \textit{SIAM J. Matrix Anal. Appl.} 36 (2)(2015)\textbf{ }
580--593.

%
%
%
%
%
%
%

   \bibitem {Zhuohengfilomat}Z.H. He, Some quaternion matrix equations involving $\phi$-Hermicity,
\textit{Filomat} 33 (2019)\textbf{ } 5097--5112.

   \bibitem {heaaca} Z.H. He, Structure, properties and applications of some simultaneous decompositions for
quaternion matrices involving $\phi$-skew-Hermicity, \textit{Adv. Appl. Clifford Algebras} 29: article 6, 2019.

\bibitem {bims}Z.H. He, The general solution to a system of coupled Sylvester-type quaternion tensor equations involving $\eta$-Hermicity,
\textit{Bull. Iranian Math. Soc.}  45 (2019)\textbf{ } 1407--1430.

\bibitem {heaaca02}   Z.H. He, A system of coupled quaternion matrix equations with seven unknowns and its applications, \textit{Adv. Appl. Clifford Algebras}  29: article 38, 2019.

%
%

\bibitem {heela}Z.H. He, J. Liu, T.Y. Tam, The general $\phi$-Hermitian solution to mixed pairs of quaternion matrix
Sylvester equations,  \textit{Electron. J. Linear Algebra} 32 (2017)\textbf{ } 475--499.

%
%
%
\bibitem {hewang4}Z.H. He, Q.W. Wang, The general solutions to some systems
of matrix equations, \textit{Linear and Multilinear Algebra}
63 (2015) \textbf{ }2017--2032.
%
%
%
%
%
%

\bibitem {GPH}G. Marsaglia, G.P.H. Styan, Equalities and inequalities for
ranks of matrices, \textit{Linear  and Multilinear Algebra} 2
(1974)\textbf{ }269--292.

\bibitem {rodman}L. Rodman, \emph{Topics in quaternion linear algebra}, Princeton University Press, 2014.


\bibitem {VLS}V.L. Syrmos, F.L. Lewis,  Output feedback eigenstructure assignment
using two Sylvester equations,
\textit{IEEE Trans. on Automatic Control}. 38(1993)\textbf{ } 495-499.

%

\bibitem {Varga} A. Varga, Robust pole assignment via Sylvester equation based state feedback parametrization.
\textit{ Computer-Aided Control System Design, 2000. CACSD 2000. IEEE International Symposium on} 57 (2000)\textbf{ } 13--18.


\bibitem {wanghe4444444}Q.W. Wang, Z.H. He, Some matrix equations with applications,
\textit{Linear and Multilinear Algebra} 60 (2012)\textbf{ }1327--1353.



%
%
%
%
%
%
%
%

\bibitem  {zhangyong}Y.N. Zhang, D.C. Jiang, J. Wang,
A recurrent neural network for solving Sylvester
equation with time-varying coefficients,
\textit{IEEE Trans. Neural Networks.} 13 (2002)\textbf{ } 1053--1063.




\end{thebibliography}
\end{document}